\tikzset{->-/.style={decoration={
			markings,
			mark=at position #1 with {\arrow{>}}},postaction={decorate}}}
\newtheorem{theorem}{Theorem}
\newtheorem{cor}{Corollary}
\newtheorem{lem}{Lemma}
\newtheorem{remark}{Remark}
\newtheorem{ex}{Example}
\title{New improved lower bounds for Zagreb indices of graphs}
\author{
	{Mamta Verma$^a$, Ravinder Kumar$^a$  }\\
	\\
	%{\small
		%$^a$ Dept. of Mathematics and Computation,}\\
	%{\small University of La Rioja.
		%Logro\~no, Spain.}\\
	%$^b${\small
		%Instituto Universitario de Matem\'atica Multidisciplinar,}\\
	%{\small Universitat Polit\`ecnica de Val\`encia.
		%Val\`encia, Spain.}\\
	$^a${\small
		Department of Mathematics and Computing,}\\
	{\small Dr B. R. Ambedkar National Institute of Technology, Jalandhar, India.}\\
	%{\quad}
	%\\
	{\small E-mail:
		mamtav.ma.22@nitj.ac.in; mamtavermag98765@gmail.com}\\
	{ \small thakurrk@nitj.ac.in; ravithakur557@gmail.com.}\\
}
\date{\quad}
\begin{document}
	\maketitle
	\begin{abstract}
		This paper presents new lower bounds for the first general Zagreb index $Z_{\alpha}(G)$ involving two, three, and four arbitrary degrees of vertices of a simple graph $G$. For the special cases $\alpha = 2$ and $\alpha = -2$, the results give sharper bounds for the first Zagreb index $M_1(G)$ and the modified first Zagreb index ${}^{m}M_1(G)$, thereby improving several well-known inequalities in the literature. Furthermore, some applications of the derived bounds for $M_1(G)$ are demonstrated, establishing new bounds for the second Zagreb index, the spectral radius, Nordhaus-Gaddum type bounds, and their corresponding coindices.
	\end{abstract}
	\noindent
	\textbf{Keywords}: Simple graph, degree-based topological index, first general Zagreb index, modified first Zagreb index, coindices, arithmetic-harmonic mean inequality.
	\noindent\\
	\textbf{Mathematics Subject Classification}:  05C07, 05C09, 05C50. \\
	
	%%%%%%%%%%%%%%%%%
	
	%%%%%%%%%%%%%%%%%
	\section{Introduction} \label{s1}
	Let $G = (V, E)$ be a finite, simple graph, where $V=V(G) = \{v_1, v_2, \ldots, v_n\}$ denotes the set of vertices and $E=E(G) = \{e_1, e_2, \ldots, e_m\}$ denotes the set of edges. The number of vertices is called the \emph{order} of the graph and is denoted by $n = |V(G)|$, while the number of edges is called the \emph{size} of the graph and is denoted by $m = |E(G)|$. The vertices of $G$ are ordered so that $\Delta = d_1 \geq d_2 \geq \cdots \geq d_{n-1} \geq d_n = \delta$, where $d_i$ is the degree of the vertex $v_i$ in $G$, for $i = 1, 2, \ldots, n$. A graph $G$ is called \emph{regular} if $\Delta = d_1 = d_2 = \cdots = d_{n-1} = d_n = \delta$. Let $\Gamma_{i,j}$ be the class of graphs such that $d_i = d_{i+1} = \cdots = d_{j-1} = d_j$, where $1 \leq i < j \leq n$. Therefore, a graph $G$ is regular if and only if $G \in \Gamma_{1,n}$.
	We use the notation $i \sim j$ to indicate that $v_i$ and $v_j$ are adjacent in $G$, and $i \nsim j$ to indicate that $v_i$ and $v_j$  are not adjacent.
	
	The degree sequence provides essential information about the structure of the graph and serves as the foundation for many degree-based topological indices, including the Zagreb indices. Among the most prominent of these indices was the \emph{first Zagreb index} \cite{gutman1972,gutman1975graph}, defined as
	\begin{equation*}
		M_1(G)=\sum_{i=1}^{n}  d_i^2 = \sum_{i \sim j} \left( d_i + d_j\right).
		\label{eq:zagreb-alpha-2}
	\end{equation*}
	Another closely related and equally significant degree-based index was the \emph{second Zagreb index}, also introduced in the same foundational work \cite{gutman1972}, and defined as
	\begin{equation*}
		M_2(G) = \sum_{i \sim j} d_i d_j.
		\label{eq:second-zagreb}
	\end{equation*}
	In addition, the \emph{forgotten index}, or \emph{F-index} \cite{furtula2015forgotten}, originally mentioned in early studies and later formalized in various works, was defined as
	\begin{equation*}
		F(G) =\sum_{i=1}^{n}  d_i^3.
		\label{eq:f-index}
	\end{equation*}
	Another variant, known as the \emph{modified first Zagreb index} \cite{modzagreb}, was defined as
	\begin{equation*}
		{}^{m}M_1(G) =\sum_{i=1}^{n}  \frac{1}{d_i^2}.
		\label{eq:modified-zagreb}
	\end{equation*}
	The \emph{inverse-degree index} \cite{mukwembi2010diameter} for a graph with no isolated vertices was defined as
	\begin{equation*}
		ID(G) =\sum_{i=1}^{n}  \frac{1}{d_i}.
		\label{eq:inverse-degree}
	\end{equation*}Let $\mathbb{R}$ denote the set of all real numbers.
	A natural generalization of the first Zagreb index was the \emph{generalized first Zagreb index} \cite{hu2005molecular,milicevic2004variable}, also known as the \emph{zeroth-order general Randić index}, defined for any $\alpha \in \mathbb{R}$ as
	\begin{equation*}
		Z_{\alpha}(G) =\sum_{i=1}^{n}  d_i^\alpha =\sum_{i \sim j} \left( d_i^{\alpha - 1} + d_j^{\alpha - 1} \right).
		\label{eq:general-zagreb}
	\end{equation*}
	Note that $Z_{0}(G) = n,Z_{1}(G) = 2m$, $Z_{2}(G) = M_1(G)$, $Z_{3}(G) = F(G)$, $Z_{-1}(G) = ID(G)$, and $Z_{-2}(G) = {}^{m}M_1(G)$.\\
	The concept of a coindex was first introduced in \cite{doslic2008vertex}. The general Zagreb coindex \cite{mansour2012and} was defined as
	\begin{equation*}
		\overline{Z}_{\alpha}(G) = \sum_{i \not \sim j}\left( d_i^{\alpha - 1} + d_j^{\alpha - 1} \right),
		\label{eq:general_randic_coindex}
	\end{equation*}
	where $\alpha$ is an arbitrary real number.
	In \cite{milovanovic2020note}, the authors proved that if $\alpha$ is any real number, then
	\begin{equation} \label{eq:randic-sum}
		Z_{\alpha}(G) + \overline{Z}_{\alpha}(G) = (n - 1) Z_{\alpha - 1}(G).
	\end{equation}
	Similarly, the \emph{second Zagreb coindex} of a simple graph $G$ was also defined in \cite{gutman2015zagreb} as
	\begin{equation*}
		\overline{M}_2(G) = \sum_{i \not \sim j} d_i  d_j.
		\label{eq:second-zagreb-coindex}
	\end{equation*}
	For more information about Zagreb indices and their mathematical properties; see \cite{ahmad2025sombor,borovicanin2016extremal,de1998upper,das2003sharp,das2016comparison,yang2022maximum,yoon2006relationship} and references therein.\\The problem of determining sharp lower bounds for the first Zagreb index $M_1(G)$
	has attracted considerable attention in recent years.
	Several results are already known in the literature; see\cite{furtula2015forgotten,de1998upper,das2003sharp,das2015zagreb,mansour2012and,yoon2006relationship}. In the present context, we need some lower bounds for $M_1(G)$. For this, consider $ G $ be a simple graph with $ n \geq 3 $ vertices and $ m $ edges. Let $\Delta,d_{2},d_{n-1},$ and $\delta$ be the largest, second largest, second smallest, and smallest  degree, respectively.  Some lower bounds for $M_1(G)$ are listed as follows:
	\begin{align}
		M_1(G) &\geq \frac{4m^2}{n} + \frac{(\Delta - \delta)^2}{2},
		\label{eq:xu2} \\
		M_1(G) &\geq \Delta^2 + d_{2}^2 + \frac{(2m - \Delta - d_{2})^2}{n - 2},
		\label{eq:xu1} \\
		% M_1(G) &\geq \Delta^2 + \frac{(2m - \Delta)^2}{n - 1} + \frac{2(n - 2)}{(n - 1)^2}(d_{2} - \delta)^2,
		% \label{eq:M1-bound2} \\
		M_1(G) &\geq \Delta^2 + \frac{(2m - \Delta)^2}{n - 1} + \frac{(d_{2} - \delta)^2}{2},
		\label{eq:avg-corollary} \\
		M_1(G) &\geq \delta^2 + \frac{(2m - \delta)^2}{n - 1} + \frac{(\Delta - d_{n-1})^2}{2}
		\label{eq:avg-corollary1}
	\end{align}and
	\begin{equation}
		\quad  \hspace{1.8cm}M_1(G) \geq \Delta^2 + \delta^2 + \frac{(2m - \Delta - \delta)^2}{n - 2} + \frac{(d_{2} - d_{n-1})^2}{2}.
		\label{eq:xu3}
	\end{equation}
	The inequality \eqref{eq:xu2} was proved in \cite{das2015zagreb}, \eqref{eq:xu1} in \cite{mansour2016new}, and \eqref{eq:avg-corollary}, \eqref{eq:avg-corollary1} and \eqref{eq:xu3} in \cite{gutman2017generalizations}.
	
	The paper is organized as follows. Section \ref{s1} serves as the introduction to the paper,  including the basic definitions and terminology used throughout the paper. In Section \ref{s2}, we begin by establishing a key lemma that provide bounds for the sum of the squares of $n$ real numbers in terms of their mean and any two arbitrary numbers. This lemma serves as the foundational tool for deriving several new results. In particular using Lemma 1, Theorems \ref{zt1}, \ref{zt2}, and \ref{zt3}  provide new lower bounds for $ Z_{\alpha}(G)$ involving $2,3$ and $4$  arbitrary vertices, respectively. For specific values of the parameter $\alpha$, especially $\alpha = 2$, Corollaries \ref{p1}--\ref{c35} give new lower bounds for the first Zagreb index $M_1(G)$, thereby improving several existing inequalities in the literature, and also show in Example \ref{ex1} that our bounds are independent of each other.  
	Moreover, we extend the application of our  theorems to derive lower bounds for other important degree-based topological indices, including the second Zagreb index, the spectral radius, Nordhaus-Gaddum type bounds, and their corresponding coindices (see; Remarks \ref{zr5}--\ref{zr4}). Furthermore, using Theorems \ref{zt1}--\ref{zt3}, we also present improved lower bounds for the modified first Zagreb index in Corollaries \ref{pm4}--\ref{pm7}. Overall, in this work we improved several bounds that exist in the literature; see \cite{das2003sharp,das2013zagreb,das2015zagreb,mansour2016new,gutman2017generalizations,gutman2015zagreb,milovanovic2015sharp,milovanovic2020note,milovsevic2019some,li2019second,matejic2022bounds,milovanovic2023some}.
	\section{Main Results}\label{s2}
	We begin our work by presenting the following lemma, which is helpful for proving our main Theorems~\ref{zt1}--\ref{zt3}.
	\begin{lem}\label{zl2}
		Let $x_1, x_2, \dots, x_n$ be $n ~(\geq 3)$ real numbers with mean $\bar{x}$. Then, for any indices $j, k \in \{1, 2, \dots, n\}$ with $j \ne k$, we have
		\begin{align}\label{zlle1}
			\sum_{i=1}^n x_i^2
			\geq     n \bar{x}^2
			+  \frac{1}{2}(x_j - x_k)^2
			+ \frac{2n}{n - 2} \left( \bar{x} - \frac{x_j + x_k}{2} \right)^2.
		\end{align}
		Equality holds in \eqref{zlle1} if and only if
		\begin{equation}\label{equality}
			x_1 = x_2 = \dots = x_{j-1} = x_{j+1} = \dots = x_{k-1} = x_{k+1} = \dots = x_n,
			~ \text{with} ~ j \neq k.
		\end{equation}
	\end{lem}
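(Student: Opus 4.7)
The plan is to reduce the inequality to the standard variance decomposition together with a single application of the Cauchy--Schwarz (equivalently, power-mean) inequality on the $n-2$ indices distinct from $j$ and $k$. The shape of the right-hand side (a term quadratic in $x_j-x_k$ plus a term quadratic in $\bar x - (x_j+x_k)/2$) is the tip-off: after centering, these two quantities become $y_j-y_k$ and $-(y_j+y_k)/2$ respectively, so the bound essentially decomposes the contribution of the special pair $(x_j,x_k)$ along the ``difference'' and ``sum'' directions.

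Concretely, I would first rewrite
\begin{equation*}
\sum_{i=1}^{n} x_i^2 = n\bar{x}^2 + \sum_{i=1}^{n}(x_i-\bar{x})^2,
\end{equation*}
and introduce $y_i=x_i-\bar{x}$, which satisfies $\sum_{i=1}^{n} y_i=0$. The inequality to prove then becomes
\begin{equation*}
\sum_{i=1}^{n} y_i^2 \;\geq\; \tfrac{1}{2}(y_j-y_k)^2 + \tfrac{n}{2(n-2)}(y_j+y_k)^2,
\end{equation*}
after using $x_j-x_k=y_j-y_k$ and $\bar{x}-(x_j+x_k)/2 = -(y_j+y_k)/2$. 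Next, I would split off the two distinguished terms and use the constraint $\sum_{i\ne j,k} y_i = -(y_j+y_k)$ together with the Cauchy--Schwarz (QM-AM) inequality to obtain
\begin{equation*}
\sum_{i\ne j,k} y_i^2 \;\geq\; \frac{1}{n-2}\Bigl(\sum_{i\ne j,k} y_i\Bigr)^{2} = \frac{(y_j+y_k)^2}{n-2}.
\end{equation*}

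It then remains to verify algebraically that
\begin{equation*}
y_j^2 + y_k^2 + \frac{(y_j+y_k)^2}{n-2} \;=\; \tfrac{1}{2}(y_j-y_k)^2 + \tfrac{n}{2(n-2)}(y_j+y_k)^2,
\end{equation*}
which follows immediately from the identity $y_j^2+y_k^2 = \tfrac{1}{2}(y_j-y_k)^2 + \tfrac{1}{2}(y_j+y_k)^2$ and the computation $\tfrac{1}{2}+\tfrac{1}{n-2} = \tfrac{n}{2(n-2)}$. Combining the two previous displays yields \eqref{zlle1}. For the equality discussion, the only inequality used is Cauchy--Schwarz on the $n-2$ values $\{y_i : i\ne j,k\}$, so equality forces these values to be equal, which, once translated back to the $x_i$'s, gives precisely condition \eqref{equality}.

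The main (minor) obstacle is recognizing the correct ``split'' of the variance: one must separate the distinguished pair $(y_j,y_k)$ from the rest and apply Cauchy--Schwarz only to the remaining $n-2$ terms, using the zero-sum constraint to convert $\sum_{i\ne j,k} y_i$ into $-(y_j+y_k)$. Once this observation is in place, the remaining computation is purely algebraic and checks out with equality, so no slack is lost.
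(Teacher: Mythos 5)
Your proof is correct and is essentially the paper's argument: both isolate the pair $(x_j,x_k)$ after the variance decomposition $\sum_i x_i^2 = n\bar{x}^2+\sum_i(x_i-\bar{x})^2$ and then bound the remaining $n-2$ terms, your Cauchy--Schwarz step $\sum_{i\ne j,k}y_i^2\ge\frac{1}{n-2}\bigl(\sum_{i\ne j,k}y_i\bigr)^2$ being exactly the paper's observation that $\sum_{i\in\tau}(x_i-\bar{x}_\tau)^2\ge 0$ for the sub-mean $\bar{x}_\tau$. The equality analysis also matches: the discarded slack vanishes precisely when the $n-2$ remaining values coincide.
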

	\begin{proof} Let $\tau = \{1, 2, \dots, n\} \setminus \{j, k\}$ and define the set $T = \{ x_i \mid i \in \tau \}$  with mean $\bar{x}_{\tau}$.
		We observe that\begin{align}
			\sum_{i=1}^n (x_i - \bar{x})^2 &=   (x_j - \bar{x})^2 + (x_k - \bar{x})^2+\sum_{i \in \tau} \left( x_i - \bar{x} \right)^2\notag\\
			&= (x_j - \bar{x})^2 + (x_k - \bar{x})^2+\sum_{i \in \tau} \left( x_i -\bar{x}_{\tau}+\bar{x}_{\tau}- \bar{x} \right)^2 \nonumber\\
			&= (x_j - \bar{x})^2 + (x_k - \bar{x})^2+\sum_{i \in \tau} (x_i - \bar{x}_{\tau})^2 + (n - 2)(\bar{x}_{\tau} - \bar{x})^2+ (\bar{x}_{\tau} - \bar{x})\sum_{i \in \tau} (x_i - \bar{x}_{\tau}).
			\label{zle2}
		\end{align}Note that\begin{equation}\label{zle33}
			\sum_{i \in \tau} (x_i - \bar{x}_{\tau})=\sum_{i \in \tau} \left( x_i - \frac{1}{n - 2} \sum_{i \in \tau} x_i\right) =\sum_{i \in \tau}  x_i -\sum_{i \in \tau} x_i=0.
		\end{equation}Also, we have
		\begin{align}
			(\bar{x}_{\tau} - \bar{x})^2
			&= \left( \frac{\sum_{i \in \tau} x_i}{n - 2} - \frac{\sum_{i=1}^n x_i}{n} \right)^2
			% &= \left( \frac{n \sum_{i=1}^{n} x_i-x_j-x_k - (n - 2) \sum_{i=1}^n x_i}{n(n - 2)} \right)^2 \notag \\
			= \left( \frac{1}{n(n - 2)} \left(  -n x_j - n x_k + 2 \sum_{i=1}^n x_i\right)  \right)^2 \notag \\
			&= \frac{4}{(n - 2)^2} \left(\bar{x}- \frac{x_j + x_k}{2}   \right)^2.
			\label{zle8}
		\end{align}
		Similarly,
		\begin{align}
			(x_j - \bar{x})^2 + (x_k - \bar{x})^2 &=
			\left( x_j - \frac{x_j + x_k}{2} + \frac{x_j + x_k}{2} - \bar{x} \right)^2
			+ \left( x_k - \frac{x_j + x_k}{2} + \frac{x_j + x_k}{2} - \bar{x} \right)^2 \notag\\
			&= \frac{1}{2}(x_j - x_k)^2 + 2 \left( \frac{x_j + x_k}{2} - \bar{x} \right)^2.
			\label{zle10}
		\end{align}
		Combining \eqref{zle2}--\eqref{zle10}, we get
		\begin{equation}
			\sum_{i=1}^n (x_i - \bar{x})^2 =  \frac{1}{2}(x_j - x_k)^2 + \frac{2n}{n - 2} \left( \bar{x} - \frac{x_j + x_k}{2} \right)^2+\sum_{i \in \tau} (x_i - \bar{x}_{\tau})^2.
			\label{zle1}
		\end{equation}
		Also, it is easy to see that
		\begin{align}\label{zle4}
			\sum_{i=1}^n (x_i - \bar{x})^2 = \sum_{i=1}^n x_i^2 - n \bar{x}^2 \quad \text{and} \quad \sum_{i \in \tau} (x_i - \bar{x}_{\tau})^2 \geq 0.
		\end{align}
		Combining \eqref{zle1} and \eqref{zle4}, we immediately get \eqref{zlle1}.\\
		Equality holds in \eqref{zlle1} if and only if $\sum_{i \in \tau} (x_i - \bar{x}_{\tau})^2 =0,$ which is possible  if and only if all $x_i \in T$ are equal.
	\end{proof}
	We now present a lower bound for $Z_{2\alpha}(G)$ in terms of any two arbitrary vertex degrees of a simple graph $G$.
	\begin{theorem}\label{zt1}
		Let $G$ be a simple graph with $n \geq 3$ vertices and $m$ edges. Let $ \Delta =d_1 \geq d_2 \geq \cdots \geq d_{n-1}\geq d_n=\delta>0$ be the degree sequence. Then for any $\alpha \in \mathbb{R}$,
		\begin{equation}\label{eq:randic_general}
			Z_{2\alpha}(G) \geq \frac{\left( Z_{\alpha}(G) \right)^2}{n} + \frac{1}{2} \left( d_j^{\alpha} - d_k^{\alpha} \right)^2 + \frac{2n}{n - 2} \left(\frac{Z_{\alpha}(G)}{n} - \frac{d_j^{\alpha} + d_k^{\alpha}}{2} \right)^2,
		\end{equation}
		with distinct indices $j \ne k$, where $j, k \in \{1, 2, \ldots, n\}$.
	\end{theorem}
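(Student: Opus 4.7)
The proof will be essentially a direct application of Lemma \ref{zl2}, since the right-hand side of \eqref{eq:randic_general} is already written in a form that mirrors the structure of \eqref{zlle1}. The plan is to specialize the lemma to the sequence of $\alpha$-th powers of the vertex degrees and then identify each term of the resulting inequality with the corresponding quantity appearing in the statement.

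More precisely, I would begin by choosing the real numbers $x_i := d_i^{\alpha}$ for $i = 1, 2, \ldots, n$. Because $\delta > 0$, every $d_i$ is strictly positive, so $x_i$ is well-defined for every real $\alpha$ (this is the only place where the positivity assumption on $\delta$ really matters). The mean of the $x_i$ is then
\begin{equation*}
\bar{x} \;=\; \frac{1}{n}\sum_{i=1}^{n} d_i^{\alpha} \;=\; \frac{Z_{\alpha}(G)}{n},
\end{equation*}
and the sum of squares is $\sum_{i=1}^{n} x_i^2 = \sum_{i=1}^{n} d_i^{2\alpha} = Z_{2\alpha}(G)$, by the definition of the generalized Zagreb index.

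Next, I would substitute these identifications into \eqref{zlle1} for the chosen indices $j,k \in \{1,\ldots,n\}$ with $j\neq k$. The first term $n\bar{x}^{2}$ becomes $(Z_{\alpha}(G))^{2}/n$; the second term $\tfrac{1}{2}(x_j-x_k)^{2}$ becomes $\tfrac{1}{2}(d_j^{\alpha}-d_k^{\alpha})^{2}$; and the third term $\tfrac{2n}{n-2}\bigl(\bar{x}-\tfrac{x_j+x_k}{2}\bigr)^{2}$ becomes $\tfrac{2n}{n-2}\bigl(\tfrac{Z_{\alpha}(G)}{n}-\tfrac{d_j^{\alpha}+d_k^{\alpha}}{2}\bigr)^{2}$. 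Assembling these pieces yields \eqref{eq:randic_general} exactly.

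There is no real obstacle here; the proof is essentially a substitution, and the only subtle point is checking that $d_i^{\alpha}$ is well-defined for all real $\alpha$, which is guaranteed by $\delta > 0$. If desired, one could also record the equality case: by Lemma \ref{zl2}, equality in \eqref{eq:randic_general} occurs precisely when the $n-2$ degrees $\{d_i : i \in \{1,\ldots,n\}\setminus\{j,k\}\}$ are all equal, i.e. when $G \in \Gamma_{i,j'}$ for the appropriate index set, which is the natural graph-theoretic translation of condition \eqref{equality}.
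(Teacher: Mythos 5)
Your proposal is correct and follows exactly the same route as the paper: set $x_i = d_i^{\alpha}$, identify $\bar{x} = Z_{\alpha}(G)/n$ and $\sum_i x_i^2 = Z_{2\alpha}(G)$, and apply Lemma \ref{zl2}. Your added remarks on the role of $\delta > 0$ and the equality case are accurate but not part of the paper's argument for this theorem.
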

	\begin{proof} Let  $x_i = d_i^{\alpha},$ $i = 1, 2, \ldots, n$, and $\alpha \in \mathbb{R}.$ Then, we can write
		\begin{equation}\label{rze1}
			\bar{x} = \frac{1}{n} \sum_{i=1}^{n} d_i^{\alpha} = \frac{Z_{\alpha}(G)}{n}, \quad \text{and} \quad\sum_{i=1}^{n} x_i^2 = \sum_{i=1}^{n} d_i^{2\alpha}={Z_{2\alpha}(G)}.
		\end{equation}
		Applying Lemma \ref{zl2} for $x_i = d_i^{\alpha},$ we have
		\begin{equation}\label{rze2}
			\sum_{i=1}^{n} d_i^{2\alpha} \geq n\bar{x}^2 + \frac{1}{2} (d_j^{\alpha} - d_k^{\alpha})^2 + \frac{2n}{n - 2} \left( \bar{x} - \frac{d_j^{\alpha} + d_k^{\alpha}}{2} \right)^2.
		\end{equation}
		The inequality \eqref{eq:randic_general}, now follows immediately on combining \eqref{rze1} and \eqref{rze2}.
	\end{proof}
	\begin{remark} In \cite{milovsevic2019some}, the following inequality was proved:
		\begin{equation}  \label{eq:randic_diff_main}
			n \cdot Z_{2\alpha}(G) - \left( Z_{\alpha}(G) \right)^2 \geq \frac{n}{2} \left( \Delta^{\alpha} - \delta^{\alpha} \right)^2.
		\end{equation}
		Clearly, our inequality \eqref{eq:randic_general} is stronger than the inequality \eqref{eq:randic_diff_main} by choosing $d_j=\Delta$ and $d_k=\delta$ in Theorem \ref{zt1}.
	\end{remark}We now present a consequence of Theorem \ref{zt1} in terms of the first Zagreb index $M_1(G)$.
	\begin{cor}\label{p1}
		Let $G$ be  a simple graph with $n \geq 3$ vertices and $m$ edges.  Let $ \Delta =d_1 \geq d_2 \geq \cdots \geq d_{n-1}\geq d_n=\delta>0$ be the degree sequence. Then
		\begin{equation}\label{zemte1}\nonumber
			M_1(G) \geq \frac{4m^2}{n} + \frac{1}{2}(d_j - d_k)^2 + \frac{2n}{n - 2} \left( \frac{2m}{n} - \frac{d_j + d_k}{2} \right)^2,
		\end{equation}
		for any distinct indices $j \ne k$, where $j, k \in \{1, 2, \ldots, n\}$.
	\end{cor}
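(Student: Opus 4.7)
The plan is to obtain the corollary as an immediate specialization of Theorem \ref{zt1} at the parameter value $\alpha = 1$. First I would recall the two identifications that make this substitution work: when $\alpha = 1$, the generalized index $Z_{2\alpha}(G) = Z_2(G)$ coincides with the first Zagreb index $M_1(G)$ by definition, and $Z_{\alpha}(G) = Z_1(G) = \sum_{i=1}^n d_i = 2m$ by the handshaking lemma. Moreover, $d_j^\alpha = d_j$ and $d_k^\alpha = d_k$ in this case.

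With these three replacements in hand, I would simply plug them into the inequality
\[
Z_{2\alpha}(G) \geq \frac{(Z_{\alpha}(G))^2}{n} + \frac{1}{2}(d_j^{\alpha} - d_k^{\alpha})^2 + \frac{2n}{n-2}\left(\frac{Z_{\alpha}(G)}{n} - \frac{d_j^{\alpha}+d_k^{\alpha}}{2}\right)^2,
\]
provided by Theorem \ref{zt1}, observing that $(Z_1(G))^2/n = 4m^2/n$ and $Z_1(G)/n = 2m/n$. The resulting inequality is precisely the claimed bound, valid for every pair of distinct indices $j,k \in \{1,\dots,n\}$, since the hypotheses on $G$ (at least three vertices, positive minimum degree) match those of Theorem \ref{zt1} exactly.

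Because this is a pure specialization, there is essentially no obstacle; the only thing worth being careful about is not to invoke $\alpha \in \mathbb{R}$ in a way that clashes with the hypothesis $\delta > 0$ (which is irrelevant at $\alpha = 1$ anyway since no negative powers appear). One could, if desired, add a brief note on the equality condition inherited from Lemma \ref{zl2}: equality occurs precisely when the vertex degrees outside positions $j$ and $k$ are all equal.
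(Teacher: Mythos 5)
Your proposal is correct and matches the paper's own proof exactly: both set $\alpha=1$ in Theorem \ref{zt1}, identify $Z_1(G)=2m$ and $Z_2(G)=M_1(G)$, and substitute. Nothing further is needed.
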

	\begin{proof}
		Note that for  $\alpha=1$, we have
		$Z_{\alpha}(G) =  \sum_{i=1}^{n} d_i = {2m}$,
		and $Z_{2\alpha}(G) = \sum d_i^2 = M_1(G)$. Hence, Corollary \ref{p1} follows directly from Theorem \ref{zt1}.
	\end{proof} Corollary~\ref{p1} provides various lower bounds for $M_1(G)$ depending on the choices of two distinct vertex degrees. In particular, by choosing the extreme degrees, that is, the maximum $\Delta$ and the minimum $\delta$, we obtain the following corollary, which improves upon \eqref{eq:xu2}.
	\begin{cor}
		Let $ G $ be a simple graph with $ n\geq3$ vertices and $ m $ edges. Then
		\begin{equation}
			M_1(G) \geq \frac{4m^2}{n} + \frac{1}{2}(\Delta - \delta)^2
			+ \frac{2n}{n-2} \left( \frac{2m}{n} - \frac{\Delta + \delta}{2} \right)^2.
			\label{zte2}
		\end{equation}
		Equality holds in \eqref{zte2} if and only if $G$ is regular or $G \in \Gamma_{2,n-1} $ or $G \in \Gamma_{1,n-1} $ or $G \in \Gamma_{2,n} .$
	\end{cor}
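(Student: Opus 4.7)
My plan is to reduce the bound \eqref{zte2} immediately to Corollary~\ref{p1} by making the specific index choice $j=1$ and $k=n$. Then $d_j = d_1 = \Delta$ and $d_k = d_n = \delta$, and substituting these values into the inequality of Corollary~\ref{p1} reproduces \eqref{zte2} verbatim. No further computation is required for the inequality itself, since the analytic work has already been carried out in Lemma~\ref{zl2} and packaged via Theorem~\ref{zt1}.

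The equality characterization is the only point of real content, so I would trace the equality condition back along the chain Corollary~\ref{p1} $\Leftarrow$ Theorem~\ref{zt1} $\Leftarrow$ Lemma~\ref{zl2}. In the notation of Lemma~\ref{zl2} with $x_i = d_i$ (i.e.\ $\alpha = 1$) and $(j,k) = (1,n)$, condition \eqref{equality} collapses to $d_2 = d_3 = \cdots = d_{n-1}$, which by definition is the statement $G \in \Gamma_{2,\,n-1}$. The map $d \mapsto d^\alpha$ is strictly monotone on positive reals, so this equality condition transfers cleanly through Theorem~\ref{zt1} without losing information.

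Finally, I would explain how the single condition $G \in \Gamma_{2,n-1}$ is equivalent to the four-way enumeration in the statement. Because the ordering $d_1 \geq d_2 = \cdots = d_{n-1} \geq d_n$ still leaves open whether $d_1$ and/or $d_n$ coincide with the common middle value, one naturally obtains a case split: $G$ regular (both endpoints equal the middle block), $G \in \Gamma_{1,n-1}$ (only $\Delta$ joins), $G \in \Gamma_{2,n}$ (only $\delta$ joins), or the generic $G \in \Gamma_{2,n-1}$ (neither joins). The disjunction of these four exhausts $\Gamma_{2,n-1}$ and matches the statement exactly. No technical obstacle is anticipated; the only point requiring minor care is keeping the inclusions among $\Gamma_{1,n}$, $\Gamma_{1,n-1}$, $\Gamma_{2,n}$, and $\Gamma_{2,n-1}$ straight so that the equality clause is stated neither too narrowly nor too broadly.
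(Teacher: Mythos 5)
Your proposal is correct and follows essentially the same route as the paper: the inequality is obtained from Corollary~\ref{p1} with $(d_j,d_k)=(\Delta,\delta)$, and the equality clause is traced back to condition \eqref{equality} of Lemma~\ref{zl2}, which for $(j,k)=(1,n)$ reads $d_2=\cdots=d_{n-1}$, i.e.\ exactly the (redundantly enumerated) union of the four classes in the statement. The only cosmetic difference is that the paper re-verifies sufficiency by directly computing the right-hand side for $G\in\Gamma_{2,n-1}$, whereas you invoke the ``if and only if'' already established in the lemma, which is equally valid.
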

	\begin{proof}
		Inequality \eqref{zte2} follows from Corollary \ref{p1} by taking $d_j=\Delta$ and $d_k=\delta$.\\
		For the equality case, let $G \in \Gamma_{2, n-1}$, which means that $d_2 = d_3 = \dots = d_{n-1} = h $ \text{(say)}.
		Therefore, from \text{
			L.H.S. of \eqref{zte2}}, $M_1(G) = (n-2)h^2 + \Delta^2 + \delta^2 ~ \text{and} ~2m = (n-2)h + \Delta + \delta.$
		Hence,
		\begin{align*}
			\text{R.H.S. of \eqref{zte2}}
			&= \frac{\left( (n-2)h + \Delta + \delta \right)^2}{n}
			+ \frac12 (\Delta - \delta)^2
			+ \frac{2n}{n-2} \left( \frac{ (n-2)h + \Delta + \delta}{n} - \frac{\Delta + \delta}{2} \right)^2 \\[6pt]
			&= \frac{\left( (n-2)h + \Delta + \delta \right)^2}{n}
			+ \frac12 (\Delta - \delta)^2
			+ \frac{n-2}{2n} \left( 2h - \Delta + \delta \right)^2 \\[6pt]
			&= (n-2)h^2 + \Delta^2 + \delta^2.
		\end{align*}
		Likewise, if $G$ is regular or $G \in \Gamma_{2,n} $ or $G \in \Gamma_{1,n-1} $, then equality holds.  \\
		Conversely, if equality in \eqref{zte2} holds, then from \eqref{zle2},  $d_2 = d_3 = \dots = d_{n-1} $, that is,
		if $G$ is regular or $G \in \Gamma_{2,n-1} $ or $G \in \Gamma_{1,n-1} $ or $G \in \Gamma_{2,n} .$
	\end{proof}
	% \begin{remark}
		% Note that $$ \frac{2n}{n-2} \left( \frac{2m}{n} - \frac{\Delta + \delta}{2} \right)^2 \geq 0 ~\forall~ n\geq 3.$$ Therefore, the inequality \eqref{zte2} always provides improvement of the inequality \eqref{eq:xu2}.
		% \end{remark}
	Likewise, the following bounds are immediate consequences of Corollary \ref{p1}.
	\begin{cor}\label{zr2}
		Let $ G $ be a simple graph with $ n\geq 3$ vertices and $ m $ edges. Let $\Delta,d_{2},d_{n-1}$ and $\delta$ be the largest, second largest,  second smallest, and smallest degree, respectively. Then
		\begin{align}
			M_1(G) \geq \frac{4m^2}{n} + \frac{1}{2}(d_{n-1} - \delta)^2
			+ \frac{2n}{n-2} \left( \frac{2m}{n} - \frac{d_{n-1} + \delta}{2} \right)^2
			\label{z2te1}
		\end{align}
		and
		\begin{equation}
			M_1(G) \geq \frac{4m^2}{n} + \frac{1}{2}(\Delta - d_{2})^2
			+ \frac{2n}{n-2} \left( \frac{2m}{n} - \frac{\Delta + d_{2}}{2} \right)^2.
			\label{z2te2}
		\end{equation}
		Equality holds in \eqref{z2te1} if and only if $G$ is regular or $G \in \Gamma_{1,n-1} $ or $G \in \Gamma_{1,n-2}.$\
		Equality holds in \eqref{z2te2} if and only if $G$ is regular or $G \in \Gamma_{3,n} $ or $G \in \Gamma_{2,n}.$
	\end{cor}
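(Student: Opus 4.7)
The plan is to obtain both inequalities as immediate specializations of Corollary \ref{p1} by choosing appropriate index pairs $(j,k)$, and then to determine the equality cases by tracing back to the equality characterization \eqref{equality} in Lemma \ref{zl2}. The only nontrivial work is bookkeeping on the equality conditions.

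To establish \eqref{z2te1}, I would take $j = n-1$ and $k = n$ in Corollary \ref{p1}, so that $d_j = d_{n-1}$ and $d_k = d_n = \delta$; substituting directly yields \eqref{z2te1}. Analogously, for \eqref{z2te2} I would take $j = 1$ and $k = 2$, so that $d_j = d_1 = \Delta$ and $d_k = d_2$, and the substitution into Corollary \ref{p1} gives \eqref{z2te2}.

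For the equality cases I apply \eqref{equality} with $x_i = d_i$. For \eqref{z2te1}, tightness forces $d_i$ to be constant across $i \in \{1,2,\ldots,n\}\setminus\{n-1,n\}$, that is, $d_1 = d_2 = \cdots = d_{n-2}$, which is precisely $G \in \Gamma_{1,n-2}$. The further subcases $G \in \Gamma_{1,n-1}$ (when additionally $d_{n-2} = d_{n-1}$) and $G$ regular (when in addition $d_{n-1} = \delta$) are then also allowed and account for the enumerated possibilities. The parallel analysis for \eqref{z2te2} gives the condition $d_3 = d_4 = \cdots = d_n$, i.e.\ $G \in \Gamma_{3,n}$, with the analogous subcases $G \in \Gamma_{2,n}$ and $G$ regular. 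The converse direction is a direct algebraic verification that each listed configuration saturates the bound: for instance, for \eqref{z2te1} with $G \in \Gamma_{1,n-2}$, setting $d_1 = \cdots = d_{n-2} = h$ gives $M_1(G) = (n-2)h^2 + d_{n-1}^2 + \delta^2$ and $2m = (n-2)h + d_{n-1} + \delta$, and plugging these into the right-hand side of \eqref{z2te1} and simplifying reproduces $M_1(G)$ exactly, mirroring the calculation already displayed in the proof of \eqref{zte2}. The cases $G \in \Gamma_{1,n-1}$ and $G$ regular follow as special instances, and the argument for \eqref{z2te2} is symmetric.

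The whole argument is routine given Corollary \ref{p1} and Lemma \ref{zl2}; there is no serious obstacle, and the only mild care required lies in correctly naming which subcollections of $\Gamma_{1,n-2}$, respectively $\Gamma_{3,n}$, should be listed separately in the equality statement and in writing out the forward-direction verification neatly.
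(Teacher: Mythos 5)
Your proposal is correct and follows exactly the route the paper intends: both inequalities are the specializations of Corollary \ref{p1} to the index pairs $(j,k)=(n-1,n)$ and $(j,k)=(1,2)$, and the equality cases come from the equality condition of Lemma \ref{zl2} (all degrees outside $\{j,k\}$ equal), verified by the same algebraic computation the paper displays for \eqref{zte2}. Your observation that the listed equality cases are nested subcases of $\Gamma_{1,n-2}$ (respectively $\Gamma_{3,n}$) is accurate and consistent with how the paper enumerates them.
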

	We now show by means of example that our bounds \eqref{zte2}, \eqref{z2te1}, and \eqref{z2te2} are mutually incomparable.
	\begin{ex}\label{ex1}
		Let $G_1,G_2$ and $G_3$ be three simple graphs as given below:
		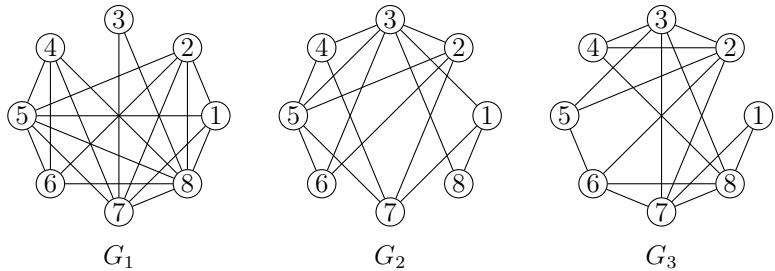
\begin{figure}[h!]
			\centering
			
			\begin{tikzpicture}[scale=0.85, every node/.style={scale=0.85}]
				
				%%% G1 %%%
				\begin{scope}[shift={(0,0)}]
					\def\n{8}
					\def\r{1.5}
					\foreach \i in {1,...,\n} {
						\node[circle, draw, fill=white, inner sep=1pt] (G1N\i) at ({360/\n * (\i - 1)}:\r) {\i};
					}
					
					% Edges from adjacency matrix
					\draw (G1N1)--(G1N2);
					\draw (G1N1)--(G1N5);
					\draw (G1N1)--(G1N7);
					\draw (G1N1)--(G1N8);
					
					\draw (G1N2)--(G1N5);
					\draw (G1N2)--(G1N6);
					\draw (G1N2)--(G1N7);
					\draw (G1N2)--(G1N8);
					
					\draw (G1N3)--(G1N7);
					\draw (G1N3)--(G1N8);
					
					\draw (G1N4)--(G1N5);
					\draw (G1N4)--(G1N6);
					\draw (G1N4)--(G1N7);
					\draw (G1N4)--(G1N8);
					
					\draw (G1N5)--(G1N6);
					\draw (G1N5)--(G1N7);
					\draw (G1N5)--(G1N8);
					
					\draw (G1N6)--(G1N8);
					
					\draw (G1N7)--(G1N8);
					\node at (0,-2.2) {$G_1$};
				\end{scope}
				
				%%% G2 %%%
				\begin{scope}[shift={(4.2,0)}]
					\def\n{8}
					\def\r{1.5}
					\foreach \i in {1,...,\n} {
						\node[circle, draw, fill=white, inner sep=1pt] (G2N\i) at ({360/\n * (\i - 1)}:\r) {\i};
					}
					\draw (G2N1)--(G2N3); \draw (G2N1)--(G2N7); \draw (G2N1)--(G2N8);
					\draw (G2N2)--(G2N3); \draw (G2N2)--(G2N5); \draw (G2N2)--(G2N6); \draw (G2N2)--(G2N7);
					\draw (G2N3)--(G2N4); \draw (G2N3)--(G2N5); \draw (G2N3)--(G2N6); \draw (G2N3)--(G2N8);
					\draw (G2N4)--(G2N5); \draw (G2N4)--(G2N7);
					\draw (G2N5)--(G2N6); \draw (G2N5)--(G2N7);
					\node at (0,-2.2) {$G_2$};
				\end{scope}
				
				%%% G3 %%%
				\begin{scope}[shift={(8.4,0)}]
					\def\n{8}
					\def\r{1.5}
					\foreach \i in {1,...,\n} {
						\node[circle, draw, fill=white, inner sep=1pt] (G3N\i) at ({360/\n * (\i - 1)}:\r) {\i};
					}
					\draw (G3N1)--(G3N7); \draw (G3N1)--(G3N8);
					\draw (G3N2)--(G3N3); \draw (G3N2)--(G3N4); \draw (G3N2)--(G3N5); \draw (G3N2)--(G3N6); \draw (G3N2)--(G3N7);
					\draw (G3N3)--(G3N4); \draw (G3N3)--(G3N5); \draw (G3N3)--(G3N7); \draw (G3N3)--(G3N8);
					\draw (G3N4)--(G3N8);
					\draw (G3N5)--(G3N6);
					\draw (G3N6)--(G3N7); \draw (G3N6)--(G3N8);
					\draw (G3N7)--(G3N8);
					\node at (0,-2.2) {$G_3$};
				\end{scope}
				
			\end{tikzpicture}
			
			\caption{Simple graphs.}
		\end{figure}
		\newpage
		% ----------- Table -------------------------
		\begin{table}[h!]
			\centering
			\renewcommand{\arraystretch}{1.4}
			\begin{tabular}{|c|c|c|c|c|c|c|c|c|}
				\hline
				\textbf{Graph} & $\Delta$ & $d_{2}$ & $d_{n-1}$ & $\delta$ & $M_1(G)$ & \eqref{zte2} & \eqref{z2te1} & \eqref{z2te2} \\
				\hline
				$G_1$ & 7 & 6 & 4 & 2 & 198.00 & 193.1667 & 189.1667 & 190.6667 \\
				\hline
				$G_2$ & 6 & 5 & 3 & 2 & 124.00 & 120.6667 & 121.1667 & 117.1667 \\
				\hline
				$G_3$ & 5 & 4 & 3 & 2 & 138.00 & 133.1667 & 129.1667 & 134.5000 \\
				\hline
			\end{tabular}
			\caption{Bounds based on degree pairs for the first Zagreb index.}
		\end{table}
	\end{ex}
	Next, we present a lower bound for $Z_{2\alpha}(G)$ in terms of the degrees of three arbitrary vertices of a graph $G$.
	\begin{theorem}\label{zt2}
		Let $G$ be a simple graph with $n\ge4$ vertices  and $m$ edges. Let $ \Delta =d_1 \geq d_2 \geq \cdots \geq d_{n-1}\geq d_n=\delta>0$ be the degree sequence. Then for any $\alpha \in \mathbb{R}$
		{ \begin{equation}\label{eq:general_randic_bound}
				Z_{2\alpha}(G) \geq d_\ell^{2\alpha} + \frac{(Z_{\alpha}(G) - d_\ell^{\alpha})^2}{n - 1} + \frac{1}{2}\left( d_j^{\alpha} - d_k^{\alpha} \right)^2 + \frac{2(n - 1)}{n - 3} \left( \frac{Z_{\alpha}(G)-d_\ell^{\alpha}}{n - 1} - \frac{d_j^{\alpha} + d_k^{\alpha}}{2} \right)^2,
		\end{equation}} where $j, k, \ell \in \{1, 2, \dots, n\}$ and $j \neq k \neq \ell$.
	\end{theorem}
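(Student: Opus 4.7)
The plan is to obtain the bound as a direct application of Lemma~\ref{zl2} after isolating one vertex. Specifically, I would set $x_i = d_i^{\alpha}$ for $i=1,2,\ldots,n$, single out the index $\ell$, and then apply Lemma~\ref{zl2} not to the full collection but to the reduced collection of $n-1$ numbers $\{x_i : i \neq \ell\}$. Since we require at least three numbers in the lemma, the hypothesis $n \geq 4$ is exactly what guarantees $n-1 \geq 3$, and the hypothesis $j \neq k \neq \ell$ ensures that $j,k$ are valid distinct indices within the reduced set.

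First I would compute the mean of the reduced collection:
\begin{equation*}
\bar{y} \;=\; \frac{1}{n-1}\sum_{i \neq \ell} d_i^{\alpha} \;=\; \frac{Z_{\alpha}(G) - d_\ell^{\alpha}}{n-1}.
\end{equation*}
Then applying Lemma~\ref{zl2} to the $n-1$ numbers $\{d_i^{\alpha} : i \neq \ell\}$ with the designated pair $(j,k)$ yields
\begin{equation*}
\sum_{i \neq \ell} d_i^{2\alpha} \;\geq\; (n-1)\bar{y}^{\,2} + \frac{1}{2}\bigl(d_j^{\alpha} - d_k^{\alpha}\bigr)^2 + \frac{2(n-1)}{(n-1)-2}\left(\bar{y} - \frac{d_j^{\alpha} + d_k^{\alpha}}{2}\right)^{\!2},
\end{equation*}
where the denominator $n-3$ in \eqref{eq:general_randic_bound} comes from $(n-1)-2$.

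Finally, I would add $d_\ell^{2\alpha}$ to both sides and use $Z_{2\alpha}(G) = d_\ell^{2\alpha} + \sum_{i\neq \ell} d_i^{2\alpha}$ together with the expression for $\bar{y}$ to recover \eqref{eq:general_randic_bound} verbatim. There is no substantive obstacle here: the proof reduces to correctly accounting for the reduced index set and verifying the arithmetic of the constants (in particular $(n-1)\bar{y}^{\,2} = (Z_\alpha(G) - d_\ell^\alpha)^2/(n-1)$). The only small point to flag is the role of $n \geq 4$, which is necessary precisely so that Lemma~\ref{zl2} is applicable to the truncated sequence.
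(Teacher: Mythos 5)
Your proposal is correct and follows essentially the same route as the paper: both apply Lemma~\ref{zl2} to the reduced collection $\{d_i^{\alpha} : i \neq \ell\}$ of $n-1$ numbers with mean $(Z_{\alpha}(G)-d_\ell^{\alpha})/(n-1)$, which produces the $n-3$ denominator, and then restore the term $d_\ell^{2\alpha}$. Your remark that $n\ge 4$ is exactly what makes the lemma applicable to the truncated sequence matches the paper's implicit use of that hypothesis.
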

	\begin{proof}
		Let $ T = \{ d_i^{\alpha} : i = 1, 2, \dots, n,\ i \neq \ell \} $ be a set of $ n - 1 $ real numbers, with mean $ \overline{T} $. Then
		\begin{equation}\label{eq:randic_t_mean}
			\overline{T} = \frac{1}{n - 1} \sum_{\substack{i=1 \\ i \neq \ell}}^{n} d_i^{\alpha} = \frac{Z_{\alpha}(G) - d_\ell^{\alpha}}{n - 1}, \quad \text{and} \quad \sum_{\substack{i=1 \\ i \neq \ell}}^{n} d_i^{2\alpha} = Z_{2\alpha}(G) - d_\ell^{2\alpha}.
		\end{equation}
		Applying Lemma \ref{zl2} to the set $T$ with $x_j = d_j^{\alpha}$ and $x_k = d_k^{\alpha}$, we find that
		\begin{equation}\label{eq:randic_refined_inequality}
			\sum_{\substack{i=1 \\ i \neq \ell}}^{n} d_i^{2\alpha} \geq (n - 1)\overline{T}^2 + \frac{1}{2}(d_j^{\alpha} - d_k^{\alpha})^2 + \frac{2(n - 1)}{n - 3} \left( \overline{T} - \frac{d_j^{\alpha} + d_k^{\alpha}}{2} \right)^2.
		\end{equation}
		Combining \eqref{eq:randic_t_mean} and \eqref{eq:randic_refined_inequality}, we get \eqref{eq:general_randic_bound}.
	\end{proof}The following corollary is an immediate consequence of Theorem \ref{zt2}.
	% By specializing Theorem \ref{zt2} to the case $\alpha = 1$, we obtain the following corollary, which gives a refined lower bound for the first Zagreb index $M_1(G)$.
	\begin{cor}\label{p2}
		Let $G$ be  a simple graph with $n \geq 4$ vertices and $m$ edges.  Let $ \Delta =d_1 \geq d_2 \geq \cdots \geq d_{n-1}\geq d_n=\delta>0$ be the degree sequence. Then
		\begin{equation}\label{zemte3}
			M_1(G) \geq d_\ell^2 + \frac{(2m - d_\ell)^2}{n - 1} + \frac{1}{2}(d_j - d_k)^2 + \frac{2(n-1)}{n - 3} \left( \frac{2m - d_\ell}{n - 1} - \frac{d_j + d_k}{2} \right)^2,
		\end{equation}
		where $j, k, \ell \in \{1, 2, \dots, n\}$ are distinct indices.
	\end{cor}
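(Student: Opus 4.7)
The plan is to derive this statement as an immediate specialization of Theorem \ref{zt2}, in direct analogy with the way Corollary \ref{p1} was obtained from Theorem \ref{zt1}. Specifically, I would set $\alpha=1$ in the inequality \eqref{eq:general_randic_bound}, so that the general Zagreb-type quantities on both sides collapse into familiar graph invariants.

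The key observation is that for $\alpha=1$ we have the two identities
\begin{equation*}
Z_{\alpha}(G)=\sum_{i=1}^{n}d_i = 2m, \qquad Z_{2\alpha}(G)=\sum_{i=1}^{n}d_i^{2}=M_1(G),
\end{equation*}
and moreover $d_i^{\alpha}=d_i$ for every vertex. Substituting these into \eqref{eq:general_randic_bound} replaces $Z_{\alpha}(G)$ by $2m$, $Z_{2\alpha}(G)$ by $M_1(G)$, $d_\ell^{\alpha}$ by $d_\ell$, $d_\ell^{2\alpha}$ by $d_\ell^{2}$, and the differences $d_j^{\alpha}-d_k^{\alpha}$ and sums $d_j^{\alpha}+d_k^{\alpha}$ by $d_j-d_k$ and $d_j+d_k$ respectively. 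After this purely notational substitution the right-hand side matches exactly the expression in \eqref{zemte3}.

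The hypothesis $n\geq 4$ in the statement of Corollary \ref{p2} is needed precisely because Theorem \ref{zt2} is applied to the $(n-1)$-element set $T=\{d_i:i\neq \ell\}$, and Lemma \ref{zl2} requires that this reduced set contain at least $3$ elements; the condition $\delta>0$ is already inherited from the hypothesis of Theorem \ref{zt2}. No additional combinatorial or analytic obstacle arises, since every step is a direct rewriting. In fact there is no genuine hard step: the only thing to verify carefully is that the substitution preserves the quadratic terms $\left(\frac{2m-d_\ell}{n-1}-\frac{d_j+d_k}{2}\right)^{2}$ and the coefficient $\frac{2(n-1)}{n-3}$ exactly as they appear in \eqref{eq:general_randic_bound}, which is immediate. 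Thus the proof will be just two short lines invoking Theorem \ref{zt2} with $\alpha=1$.
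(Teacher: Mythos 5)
Your proposal is correct and coincides exactly with the paper's own proof, which likewise obtains Corollary~\ref{p2} by setting $\alpha=1$ in Theorem~\ref{zt2} so that $Z_{\alpha}(G)=2m$ and $Z_{2\alpha}(G)=M_1(G)$. Your additional remark explaining why $n\geq 4$ is required (so that the $(n-1)$-element set $T$ still has at least three elements for Lemma~\ref{zl2}) is accurate and slightly more explicit than the paper's one-line justification.
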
\begin{proof}
		The proof of the corollary follows immediately by selecting $\alpha=1$ in Theorem \ref{zt2}.
	\end{proof}
	Note that the inequality \eqref{zemte3} is stronger than the inequality  
	\begin{equation*}
		M_1(G) \geq d_\ell^2 + \frac{(2m - d_\ell)^2}{n - 1} + \frac{1}{2}(\Delta -\delta)^2,
		\label{zt3e2}
	\end{equation*}
	which was   given in \cite{gutman2017generalizations} by taking $d_j=\Delta, d_k=\delta$.\\
	By specializing Corollary \ref{p2} in such a manner that they correspond to the largest, second largest, second smallest, and smallest degrees of $G$, we obtain the following corollary.
	
	\begin{cor}\label{c3}
		Let $ G $ be a simple graph with $ n \geq 4 $ vertices and $ m $ edges. Let $\Delta,d_{2},d_{n-1}$ and $\delta$ be the largest, second largest, second smallest, and smallest  degree, respectively. Then
		\begin{align}
			M_1(G) \geq \Delta^2 + \frac{(2m - \Delta)^2}{n - 1}
			+ \frac{1}{2}(d_{2} - \delta)^2
			+ \frac{2(n - 1)}{n - 3} \left( \frac{2m - \Delta}{n - 1} - \frac{d_{2} + \delta}{2} \right)^2
			\label{zr31}
		\end{align}and
		\begin{align}
			M_1(G) \geq \delta^2 + \frac{(2m - \delta)^2}{n - 1}
			+ \frac{1}{2}(\Delta - d_{n-1})^2
			+ \frac{2(n - 1)}{n - 3} \left( \frac{2m - \delta}{n - 1} - \frac{\Delta + d_{n-1}}{2} \right)^2.
			\label{zr32}
		\end{align}
		Equality holds in \eqref{zr31} if and only if $G$ is regular, or $G \in \Gamma_{3,n-1}$, or $G \in \Gamma_{3,n}$, or $G \in \Gamma_{2,n-1}$, or $G \in \Gamma_{2,n}$, or $G \in \Gamma_{1,n-1}$. Equality holds in \eqref{zr32} if and only if $G$ is regular, or $G \in \Gamma_{2,n-2}$, or $G \in \Gamma_{2,n-1}$, or $G \in \Gamma_{2,n}$, or $G \in \Gamma_{1,n-2}$, or $G \in \Gamma_{1,n-1}$.
	\end{cor}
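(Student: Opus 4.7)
The plan is to obtain both \eqref{zr31} and \eqref{zr32} as direct specializations of Corollary~\ref{p2}. For \eqref{zr31} I would choose the indices $\ell=1$, $j=2$, $k=n$ in \eqref{zemte3}, so that $d_{\ell}=\Delta$, $d_{j}=d_{2}$ and $d_{k}=\delta$; for \eqref{zr32} I would take $\ell=n$, $j=1$, $k=n-1$, giving $d_{\ell}=\delta$, $d_{j}=\Delta$ and $d_{k}=d_{n-1}$. In each case the substitution reproduces the target right-hand side verbatim, so the inequalities themselves require no further work.

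For the equality characterization of \eqref{zr31}, I would trace back through the proof of Theorem~\ref{zt2}: the bound there is produced by applying Lemma~\ref{zl2} to the $(n-1)$-element set $\{d_i:i\ne\ell\}$ with distinguished pair $(d_j,d_k)$ (under $\alpha=1$). By the equality clause of Lemma~\ref{zl2}, equality propagates to \eqref{zemte3} precisely when the remaining $n-3$ degrees $\{d_i:i\ne\ell,j,k\}$ are all equal. With the choice $\ell=1$, $j=2$, $k=n$ this translates to the single condition $d_3=d_4=\cdots=d_{n-1}$, i.e.\ $G\in\Gamma_{3,n-1}$, and the six classes listed in the statement exhaust the refinements of this condition, depending on whether $d_2$ is tied to the common value $d_3=\cdots=d_{n-1}$ and whether $d_{n-1}$ is tied to $\delta$. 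For the sufficiency direction I would verify by direct substitution in the style already employed for \eqref{zte2}: writing $h$ for the common value of $d_3=\cdots=d_{n-1}$ and using $2m=\Delta+d_2+(n-3)h+\delta$, the right-hand side of \eqref{zr31} expands and collapses algebraically to $\Delta^{2}+d_{2}^{2}+(n-3)h^{2}+\delta^{2}=M_{1}(G)$, and the remaining five classes are handled by specialization.

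The equality analysis for \eqref{zr32} is structurally identical: with the choice $\ell=n$, $j=1$, $k=n-1$ the equality clause of Lemma~\ref{zl2} now requires $d_2=d_3=\cdots=d_{n-2}$, i.e.\ $G\in\Gamma_{2,n-2}$, and the six listed classes enumerate its refinements according as $\Delta$ (respectively $d_{n-1}$) coincides with the common value or with $d_1$ (respectively $d_n$). The only real obstacle in the whole argument is bookkeeping: expanding the squared terms and verifying the collapsing identity under each structural hypothesis, exactly as done for \eqref{zte2}. No ingredient beyond Corollary~\ref{p2} and the equality clause of Lemma~\ref{zl2} is needed.
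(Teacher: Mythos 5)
Your proposal is correct and follows exactly the route the paper intends: the paper gives no separate proof of Corollary~\ref{c3} beyond the remark that it is obtained by specializing Corollary~\ref{p2}, and your index choices $(\ell,j,k)=(1,2,n)$ and $(n,1,n-1)$ are precisely the right specializations, with the equality analysis traced back through Lemma~\ref{zl2} in the same way the paper handles the equality case of \eqref{zte2}. Your observation that the equality condition reduces to the single class $\Gamma_{3,n-1}$ (resp.\ $\Gamma_{2,n-2}$), of which the other listed classes are special cases, is also consistent with how the paper presents its equality statements.
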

	It is easy to see that the inequalities \eqref{zr31} and \eqref{zr32} are stronger than \eqref{eq:avg-corollary} and \eqref{eq:avg-corollary1}, respectively. \\
	
	We now establish a lower bound for $Z_{2\alpha}(G)$ expressed in terms of the degrees of any four vertices of a simple graph $G$.
	\begin{theorem}\label{zt3}
		Let $G$ be a simple graph with $n\ge5 $ vertices and $m$ edges. Let $ \Delta =d_1 \geq d_2 \geq \cdots \geq d_{n-1}\geq d_n=\delta>0$ be the degree sequence. Then for any $\alpha \in \mathbb{R}$,
		\begin{equation}
			\hspace{-.1cm} Z_{2\alpha}(G) \geq d_l^{\alpha} + d_m^{\alpha} +  \frac{\left(Z_{\alpha}(G) - d_l^{\alpha} - d_m^{\alpha}\right)^2}{n - 2}  + \frac{1}{2}(d_j^{\alpha} - d_k^{\alpha})^2
			+ \frac{2(n - 2)}{n - 4} \left( \frac{Z_{\alpha}(G) - d_l^{\alpha} - d_m^{\alpha}}{n - 2} - \frac{d_j^{\alpha} + d_k^{\alpha}}{2} \right)^2,
			\label{eq:z4te1_randicnew_swapped}
		\end{equation}where $j, k, \ell, m \in \{1,2,\dots,n\}$ are distinct indices.
	\end{theorem}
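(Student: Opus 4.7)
The plan is to mirror the strategy used for Theorems~\ref{zt1} and \ref{zt2}, but now excluding two indices rather than zero or one from the degree set before applying Lemma~\ref{zl2}. First I would form the set
\[
T = \bigl\{ d_i^{\alpha} : i \in \{1,2,\ldots,n\} \setminus \{\ell, m\} \bigr\},
\]
consisting of $n-2$ real numbers. A direct computation gives the mean
\[
\overline{T} = \frac{1}{n-2}\sum_{\substack{i=1 \\ i \neq \ell, m}}^{n} d_i^{\alpha} = \frac{Z_{\alpha}(G) - d_\ell^{\alpha} - d_m^{\alpha}}{n-2},
\]
and the sum of squares
\[
\sum_{\substack{i=1 \\ i \neq \ell, m}}^{n} d_i^{2\alpha} = Z_{2\alpha}(G) - d_\ell^{2\alpha} - d_m^{2\alpha}.
\]

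Next, I would invoke Lemma~\ref{zl2} applied to the set $T$, with the two distinguished elements taken to be $x_j = d_j^{\alpha}$ and $x_k = d_k^{\alpha}$. The hypothesis $n \geq 5$ ensures $|T| = n-2 \geq 3$, which is required for Lemma~\ref{zl2} to apply, and the distinctness of $j, k, \ell, m$ guarantees $d_j^{\alpha}, d_k^{\alpha} \in T$ so that the lemma's selection of two indices is legitimate. The lemma then yields
\[
\sum_{\substack{i=1 \\ i \neq \ell, m}}^{n} d_i^{2\alpha}
\geq (n-2)\,\overline{T}^{\,2}
+ \tfrac{1}{2}\bigl(d_j^{\alpha} - d_k^{\alpha}\bigr)^{2}
+ \frac{2(n-2)}{n-4}\left( \overline{T} - \frac{d_j^{\alpha} + d_k^{\alpha}}{2}\right)^{2}.
\]

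Substituting the expressions for $\overline{T}$ and for $\sum_{i\neq \ell,m} d_i^{2\alpha}$, then transposing $d_\ell^{2\alpha} + d_m^{2\alpha}$ to the right-hand side, produces exactly the bound \eqref{eq:z4te1_randicnew_swapped}. Since the argument is a direct structural generalization of Theorem~\ref{zt2}, with one additional vertex removed before applying Lemma~\ref{zl2}, no real obstacle appears; the only care needed is bookkeeping: tracking the correct reduced dimension $n-2$ in the coefficient $\tfrac{2(n-2)}{n-4}$, verifying the threshold $n \geq 5$ guaranteeing $n-4 \geq 1$ and $n-2 \geq 3$, and using the all-distinct hypothesis on $j,k,\ell,m$ to ensure the two designated indices survive in the restricted set.
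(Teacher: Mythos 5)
Your proposal is correct and follows essentially the same route as the paper's own proof: restrict to the $n-2$ degrees with indices outside $\{\ell,m\}$, compute the mean of that set, apply Lemma~\ref{zl2} to it with distinguished elements $d_j^{\alpha}$ and $d_k^{\alpha}$, and substitute back, with the same checks that $n\ge 5$ gives $|T|=n-2\ge 3$. The only remark worth adding is that your transposition correctly yields $d_\ell^{2\alpha}+d_m^{2\alpha}$ on the right-hand side, which exposes a typo in the displayed inequality \eqref{eq:z4te1_randicnew_swapped}, where the paper writes $d_l^{\alpha}+d_m^{\alpha}$ instead.
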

	\begin{proof}
		Let $ T = \{ d_i^{\alpha} : i = 1, 2, \dots, n,\ i \neq \ell \neq m \} $ be a set of $ n - 2 $ real numbers, with mean $ \overline{T} $. Then
		\begin{equation}
			\overline{T} = \frac{1}{n - 2} \sum_{\substack{i=1\\ i \ne l, m}}^{n} d_i^{\alpha} = \frac{\sum_{i=1}^{n} d_i^{\alpha} - d_l^{\alpha} - d_m^{\alpha}}{n - 2}.
			\label{eq:mean_T_swapped}
		\end{equation}
		Applying Lemma \ref{zl2} for the set $T$ of $n-2$ elements, we get
		\begin{equation}
			\sum_{\substack{i=1\\ i \ne l, m}}^{n} d_i^{2\alpha} \geq (n - 2)\overline{T}^2 + \frac{1}{2}(d_j^{\alpha} - d_k^{\alpha})^2 + \frac{2(n - 2)}{n - 4} \left( \overline{T} - \frac{d_j^{\alpha} + d_k^{\alpha}}{2} \right)^2.
			\label{eq:refined_T_swapped}
		\end{equation}
		Substituting the value of $\overline{T}$ from \eqref{eq:mean_T_swapped} in \eqref{eq:refined_T_swapped} which directly leads to \eqref{eq:z4te1_randicnew_swapped}.
	\end{proof}
	As a direct consequence of Theorem~\ref{zt3}, we obtain the following corollary.
	\begin{cor}\label{p3}
		Let $G$ be  a simple graph with $n \geq 5$ vertices and $m$ edges.  Let $ \Delta =d_1 \geq d_2 \geq \cdots \geq d_{n-1}\geq d_n=\delta>0$ be the degree sequence. Then
		\begin{equation}\label{zemte2}\nonumber
			M_1(G) \geq d_j^{2} + d_k^{2}+ \frac{\left(2m - d_j^{2} - d_k^{2} \right)^2}{n - 2}+ \frac{1}{2} (d_l - d_m)^2
			+ \frac{2\left(n-2 \right) }{n - 4} \left( \frac{2m - d_j - d_k}{n - 2} - \frac{d_l + d_m}{2} \right)^2,
		\end{equation}
		for distinct indices $j, k, l, m \in \{1, 2, \dots, n\}$.
	\end{cor}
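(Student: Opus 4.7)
The plan is to obtain Corollary \ref{p3} as an immediate specialization of Theorem \ref{zt3}, in precisely the same manner that Corollary \ref{p1} was deduced from Theorem \ref{zt1} and Corollary \ref{p2} was deduced from Theorem \ref{zt2}. Since the statement of Theorem \ref{zt3} holds for an arbitrary real parameter $\alpha$, and the first Zagreb index $M_1(G)$ corresponds to the choice $\alpha = 1$, the route is entirely mechanical.

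The key steps are as follows. First, I set $\alpha = 1$ in Theorem \ref{zt3} and record the identifications
\begin{equation*}
Z_{\alpha}(G) = Z_1(G) = \sum_{i=1}^{n} d_i = 2m, \qquad Z_{2\alpha}(G) = Z_2(G) = \sum_{i=1}^{n} d_i^{2} = M_1(G),
\end{equation*}
together with $d_i^{\alpha} = d_i$ and $d_i^{2\alpha} = d_i^{2}$ for every $i \in \{1, 2, \dots, n\}$. Second, I substitute these values into the right-hand side of \eqref{eq:z4te1_randicnew_swapped}, so that the leading squared-degree term, the average-type correction, the pairwise squared difference, and the weighted deviation term in Theorem \ref{zt3} all reduce to their stated $\alpha = 1$ forms. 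Third, I verify that the hypothesis $n \geq 5$ of Corollary \ref{p3} is inherited directly from Theorem \ref{zt3}, as it is precisely what is needed to keep the factor $\tfrac{2(n-2)}{n-4}$ well defined and positive, and that the requirement of four pairwise distinct indices $j, k, \ell, m$ is preserved verbatim.

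There is essentially no genuine obstacle in this argument; the only care required is bookkeeping — matching the labels $(j,k,\ell,m)$ used in the statement of Theorem \ref{zt3} to those appearing in the statement of Corollary \ref{p3}, and keeping track of which pair plays the role of the two removed elements (contributing the $d^{2}$ terms and the mean correction) versus the pair that appears inside the squared difference and the deviation from the mean. Once this correspondence is fixed, the inequality of Corollary \ref{p3} follows by direct substitution, yielding a one-line proof.
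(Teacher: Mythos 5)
Your proposal is correct and matches the paper exactly: Corollary \ref{p3} is stated there as a direct consequence of Theorem \ref{zt3} with $\alpha=1$, using precisely the identifications $Z_1(G)=2m$, $Z_2(G)=M_1(G)$, $d_i^{\alpha}=d_i$, $d_i^{2\alpha}=d_i^2$, and the label bookkeeping you describe. Note that carrying out the substitution carefully, as you propose, actually shows the printed numerator $\left(2m-d_j^{2}-d_k^{2}\right)^2$ should read $\left(2m-d_j-d_k\right)^2$ (and the leading terms of \eqref{eq:z4te1_randicnew_swapped} should be $d_l^{2\alpha}+d_m^{2\alpha}$), so your derivation yields the intended, corrected form of the statement.
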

	% In the similar manner, using Corollary \ref{p2} and Corollary \ref{p3}, several other lower bounds for $M_1(G)$  can be obtained. Some are listed below in Corollary \ref{c3}.
	\begin{cor}\label{c35}
		Let $G$ be  a simple graph with $n \geq 5$ vertices and $m$ edges.  Let $ \Delta =d_1 \geq d_2 \geq \cdots \geq d_{n-1}\geq d_n=\delta>0$ be the degree sequence. Then
		\begin{align}
			M_1(G) &\geq \Delta^2 + d_{2}^2 + \frac{(2m - \Delta - d_{2})^2}{n - 2}
			+ \frac{1}{2}(d_{n-1} - \delta)^2 + \frac{2(n - 2)}{n - 4} \left( \frac{2m - \Delta - d_{2}}{n - 2} - \frac{d_{n-1} + \delta}{2} \right)^2
			\label{eq:xu4}
		\end{align}and
		\begin{align}
			M_1(G) &\geq \Delta^2 + \delta^2 + \frac{(2m - \Delta - \delta)^2}{n - 2}
			+ \frac{1}{2}(d_{2} - d_{n-1})^2  + \frac{2(n - 2)}{n - 4} \left( \frac{2m - \Delta - \delta}{n - 2} - \frac{d_{2} + d_{n-1}}{2} \right)^2.
			\label{z24degree}
		\end{align}  Equality holds in \eqref{z24degree} and \eqref{eq:xu4} if and only if $G$ is regular, or $G \in \Gamma_{3,n-2}$, or $G \in \Gamma_{2,n-2}$, or $G \in \Gamma_{3,n-1}$, or $G \in \Gamma_{2,n-1}$, or $G \in \Gamma_{3,n}$, or $G \in \Gamma_{2,n}$, or $G \in \Gamma_{1,n-2}$, or $G \in \Gamma_{1,n-1}$.
	\end{cor}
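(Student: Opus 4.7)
The plan is to derive both inequalities as direct specializations of Corollary \ref{p3} (equivalently, of Theorem \ref{zt3} with $\alpha=1$), by plugging in the four distinguished vertices $\Delta, d_2, d_{n-1}, \delta$ with two different index pairings, and then read off the equality cases from the underlying Lemma \ref{zl2}.

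First, for \eqref{eq:xu4}, I would set the two ``outer'' indices in Corollary \ref{p3} to be $j=1$ and $k=2$, so that $d_j=\Delta$ and $d_k=d_2$ (thus $d_j^2+d_k^2=\Delta^2+d_2^2$ and $2m-d_j-d_k=2m-\Delta-d_2$), and the two ``inner'' indices to be $l=n-1$ and $m=n$, so that $d_l=d_{n-1}$ and $d_m=\delta$. Substituting yields \eqref{eq:xu4} immediately. For \eqref{z24degree}, I would swap the roles: take $j=1, k=n$ (with $d_j=\Delta, d_k=\delta$) as the outer pair, and $l=2, m=n-1$ (with $d_l=d_2, d_m=d_{n-1}$) as the inner pair, and again substitute into Corollary \ref{p3}.

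For the equality analysis, I would trace the chain of inequalities backwards. Theorem \ref{zt3} applies Lemma \ref{zl2} to the $(n-2)$-element set $T=\{d_i : i\ne \text{outer pair}\}$ with the inner pair singled out. Since the only inequality in the derivation is \eqref{zle4} in Lemma \ref{zl2}, equality holds if and only if all elements of $T$ except the inner pair are equal, i.e.\ all degrees outside $\{\Delta, d_2, d_{n-1}, \delta\}$ coincide. In both \eqref{eq:xu4} and \eqref{z24degree} these four distinguished degrees are indexed by $\{1,2,n-1,n\}$, so the core equality condition is $d_3=d_4=\cdots=d_{n-2}$, i.e.\ $G\in\Gamma_{3,n-2}$. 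The remaining classes $\Gamma_{2,n-2}, \Gamma_{3,n-1}, \Gamma_{2,n-1}, \Gamma_{3,n}, \Gamma_{2,n}, \Gamma_{1,n-2}, \Gamma_{1,n-1}$ are precisely the degenerate subcases in which the common middle value coincides with $d_2$, $d_{n-1}$, $\delta$, or $\Delta$.

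For the converse direction I would verify, in the same direct-substitution style used for \eqref{zte2}, that each listed $\Gamma_{i,j}$ produces equality: setting $d_3=\cdots=d_{n-2}=h$ (say) and expressing $M_1(G)$ and $2m$ in terms of $\Delta, d_2, h, d_{n-1}, \delta$, the right-hand sides of \eqref{eq:xu4} and \eqref{z24degree} collapse algebraically to $\Delta^2+d_2^2+(n-4)h^2+d_{n-1}^2+\delta^2$, which equals $M_1(G)$. The remaining $\Gamma$-classes follow by further identification of these variables. The main obstacle is precisely this equality enumeration: the inequalities themselves drop out by substitution, but one must carefully check that every listed $\Gamma_{i,j}$ achieves equality and that no additional cases are lost, which amounts to confirming that forcing any equality among $\Delta, d_2, d_{n-1}, \delta$ and the common middle value $h$ still satisfies the single structural constraint $d_3=\cdots=d_{n-2}$.
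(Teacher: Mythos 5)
Your proposal is correct and follows essentially the same route as the paper: the paper presents Corollary~\ref{c35} as an immediate specialization of Theorem~\ref{zt3} (via Corollary~\ref{p3}) with the index choices you describe, and your equality analysis---tracing the unique inequality back to Lemma~\ref{zl2} to get $d_3=\cdots=d_{n-2}$, i.e.\ $G\in\Gamma_{3,n-2}$, with the other listed $\Gamma$-classes as degenerate subcases---mirrors the paper's explicit treatment of the analogous two-degree bound \eqref{zte2}. You in fact supply the verification of the equality cases that the paper omits, and your algebraic collapse of the right-hand side to $\Delta^2+d_2^2+(n-4)h^2+d_{n-1}^2+\delta^2$ checks out.
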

	Observe that the inequalities \eqref{eq:xu4}  and \eqref{z24degree} are stronger than \eqref{eq:xu1} and \eqref{eq:xu3}, respectively.
	\\Next, we present an inequality involving the first general Zagreb index, the first general Zagreb coindex, and the degrees of vertices of a graph.
	\begin{cor}\label{p4}
		Under the hypothesis of Theorem \ref{zt1}, we have
		\begin{align}\label{rc1}
			Z_{2\alpha+1}(G) +  \overline{Z}_{2\alpha+1}(G)
			\geq (n - 1) \left( \ \frac{{Z_{\alpha}(G)}^2}{n}  + \frac{1}{2} \left( d_j^{\alpha} - d_k^{\alpha} \right)^2   + \frac{2n}{n    - 2} \left( \frac{Z_{\alpha}(G)}{n} - \frac{d_j^{\alpha} + d_k^{\alpha}}{2} \right)^2 \right).
		\end{align}
	\end{cor}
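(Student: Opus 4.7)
The plan is to reduce the desired inequality to a product of two already-available ingredients: the combinatorial identity \eqref{eq:randic-sum} that links $Z_{\alpha}$ and $\overline{Z}_{\alpha}$, and the lower bound for $Z_{2\alpha}(G)$ established in Theorem \ref{zt1}. The right-hand side of \eqref{rc1} is literally $(n-1)$ times the right-hand side of \eqref{eq:randic_general}, so the whole task is to express $Z_{2\alpha+1}(G)+\overline{Z}_{2\alpha+1}(G)$ as $(n-1)$ times some quantity that Theorem \ref{zt1} bounds from below.

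First I would substitute $\alpha \leftarrow 2\alpha+1$ in the identity \eqref{eq:randic-sum}, which yields
\begin{equation*}
    Z_{2\alpha+1}(G) + \overline{Z}_{2\alpha+1}(G) = (n-1)\, Z_{2\alpha}(G).
\end{equation*}
This is a clean equality (not an estimate), so no loss is incurred here. Then I would invoke Theorem \ref{zt1} to lower-bound the factor $Z_{2\alpha}(G)$ on the right by
\begin{equation*}
    \frac{Z_{\alpha}(G)^2}{n} + \frac{1}{2}(d_j^{\alpha} - d_k^{\alpha})^2 + \frac{2n}{n-2}\left(\frac{Z_{\alpha}(G)}{n} - \frac{d_j^{\alpha} + d_k^{\alpha}}{2}\right)^2,
\end{equation*}
valid for any distinct indices $j \ne k$ in $\{1,\dots,n\}$. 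Multiplying through by the positive factor $(n-1)$ preserves the inequality and immediately produces \eqref{rc1}.

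There is no real obstacle here; the step that warrants the most care is simply verifying that the parameter substitution into \eqref{eq:randic-sum} is the correct one (shifting the exponent by one to match the $Z_{2\alpha}$ appearing in Theorem \ref{zt1}) and that the hypotheses of Theorem \ref{zt1} — namely $n \ge 3$ vertices and positive degree sequence — are already built into the statement of Corollary \ref{p4} through the phrase ``under the hypothesis of Theorem \ref{zt1}.'' No further degree-based estimation, AM-HM manipulation, or case analysis is required.
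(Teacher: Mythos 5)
Your proof is correct and is essentially identical to the paper's own argument: both substitute $\alpha \leftarrow 2\alpha+1$ into the identity \eqref{eq:randic-sum} to get $Z_{2\alpha+1}(G)+\overline{Z}_{2\alpha+1}(G)=(n-1)Z_{2\alpha}(G)$ and then apply the lower bound \eqref{eq:randic_general} from Theorem \ref{zt1} to the factor $Z_{2\alpha}(G)$. No further comment is needed.
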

	\begin{proof}
		From the relation between the first general index and coindex in \eqref{eq:randic-sum}, we have  
		\begin{equation} \label{eq:randic-sum1}
			Z_{2\alpha+1}(G) +  \overline{Z}_{2\alpha+1}(G)
			= (n - 1) \, Z_{2\alpha}(G).
		\end{equation}
		Combining  \eqref{eq:randic_general} and \eqref{eq:randic-sum1}, we immediately get \eqref{rc1}.
	\end{proof}Likewise, one can derive bounds for $ Z_{2\alpha+1}(G) +  \overline{Z}_{2\alpha+1}(G)$ using Theorem \ref{zt2} and Theorem \ref{zt3}.\\
	
	We now give some applications of our bounds to derive lower bounds for other important degree-based topological indices, including the second Zagreb index, the spectral radius, Nordhaus-Gaddum type bounds and their corresponding coindices in the following remarks.
	\begin{remark}\label{zr5}
		For the second Zagreb index $M_2(G)$, the following lower bound was given in \cite{das2013zagreb},
		\begin{equation}
			M_2(G) \geq 2m^2 - (n - 1)m\Delta + \frac{1}{2}(\Delta - 1)M_1(G).
			\label{eq:M2_basic_bound}
		\end{equation}
		By substituting various lower bounds for $M_1(G)$ in \eqref{eq:M2_basic_bound}, one can obtain various lower bounds for $M_2(G)$.  
		For instance, the following two bounds are immediate consequences of \eqref{zr31} and \eqref{zr32},
		\begin{equation}\nonumber
			M_2(G) \geq 2m^2 - (n - 1)m\Delta + \frac{1}{2}(\Delta - 1) \Bigg( \delta^2 + \frac{(2m - \delta)^2}{n - 1} + \frac{1}{2}(\Delta - d_{n-1})^2
			+ \frac{2(n - 1)}{n - 3} \Bigg( \frac{2m - \delta}{n - 1} - \frac{\Delta + d_{n-1}}{2} \Bigg)^2 \Bigg)
			\label{eq:M2_bound1}
		\end{equation}and
		\begin{equation}\nonumber
			M_2(G) \geq 2m^2 - (n - 1)m\Delta + \frac{1}{2}(\Delta - 1) \Bigg(  \Delta^2 + \frac{(2m - \Delta)^2}{n - 2} + \frac{1}{2}(d_2 - \delta)^2 +\frac{2(n - 1)}{n - 3} \left( \frac{2m - \Delta}{n - 1} - \frac{d_{2} + \delta}{2} \right)^2\Bigg),
			\label{eq:M2_bound2}
		\end{equation}which are stronger than \begin{equation}\nonumber
			M_2(G) \geq 2m^2 - (n - 1)m\Delta + \frac{1}{2}(\Delta - 1) \Bigg( \delta^2 + \frac{(2m - \delta)^2}{n - 1} + \frac{1}{2}(\Delta - d_{n-1})^2
			\Bigg)
			\label{eq:M2_bound1}
		\end{equation}and
		\begin{equation}\nonumber
			M_2(G) \geq 2m^2 - (n - 1)m\Delta + \frac{1}{2}(\Delta - 1) \Bigg(  \Delta^2 + \frac{(2m - \Delta)^2}{n - 2} + \frac{1}{2}(d_2 - \delta)^2 \Bigg),
			\label{eq:M2_bound2}
		\end{equation} obtained in \cite{gutman2017generalizations}.
	\end{remark}\begin{remark}\label{zr6}
		Denote by $A = (a_{ij})$ the adjacency matrix of a graph $G$. The eigenvalues of $A$ are real and are denoted by $\lambda_1 \geq \lambda_2 \geq \cdots \geq \lambda_n$. The largest eigenvalue $\lambda_1$ is called the spectral radius of $G$.
		It was proved in \cite{hofmeister1988spectral,yu2004spectral} that spectral radius $\lambda_1$ of the graph $G$ satisfies the inequality
		\begin{equation*}
			\lambda_1 \geq \sqrt{\frac{M_1(G)}{n}},
		\end{equation*} and therefore using \eqref{z24degree}, we obtain that
		\begin{equation*}
			\lambda_1 \geq\
			\sqrt{
				\frac{1}{n} \left(
				\Delta^2 + \delta^2
				+ \frac{(2m - \Delta - \delta)^2}{n - 2}
				+ \frac{1}{2}(d_2 - d_{n-1})^2
				+ \frac{2(n - 2)}{n - 4} \left(
				\frac{2m - \Delta - d_{2}}{n - 2}
				- \frac{d_{n-1} + \delta}{2}
				\right)^2 \right)},
		\end{equation*}which is stronger than \begin{equation*}
			\lambda_1 \geq\
			\sqrt{
				\frac{1}{n} \left(
				\Delta^2 + \delta^2
				+ \frac{(2m - \Delta - \delta)^2}{n - 2}
				+ \frac{1}{2}(d_2 - d_{n-1})^2 \right)},
		\end{equation*}obtained in \cite{gutman2017generalizations}.
	\end{remark}
	\begin{remark}\label{zr3}
		Let $ G $ be a simple graph and let  $\overline{G}$ denotes the complement of $G$. Then, the following Nordhaus-Gaddum type relations are valid:
		\begin{align}
			M_1(G) + M_1(\overline{G}) &= n(n-1)^2 - 4m(n-1) + 2 M_1(G),
			\label{eq:M1-complement-sum} \\
			M_2(G) + M_2(\overline{G}) &= \tfrac{n(n-1)^3}{2} + 2m^2 - 3m(n-1)^2
			+ \left(n - \tfrac{3}{2}\right) M_1(G)
			\label{eq:M2-complement-sum}
		\end{align}and
		\begin{align}
			F(G) + F(\overline G) &= n(n - 1)^3 - 6m(n - 1)^2 + 3(n - 1) M_1(G),
			\label{eq:FplusFbar2}
		\end{align}where \eqref{eq:M1-complement-sum} was proved in \cite{das2015zagreb}, \eqref{eq:M2-complement-sum} in \cite{gutman2015zagreb}
		and \eqref{eq:FplusFbar2} in \cite{de2016f}, respectively.
		Hence, using \eqref{zte2} we obtain the following bounds:
		\begin{equation*}\label{eq:M1-complement-sumn}
			\hspace{-.5cm}M_1(G) + M_1(\overline{G}) \geq {n(n-1)^2} - 4m(n-1) + 2 \left( \frac{4m^2}{n} + \frac{1}{2}(\Delta - \delta)^2+ \frac{2n}{n-2} \left( \frac{2m}{n} - \frac{\Delta + \delta}{2} \right)^2\right),
		\end{equation*}
		\begin{equation*}
			\hspace{-.1cm}F(G) + {F}(\overline G) \geq n(n - 1)^3 - 6m(n - 1)^2  + 3(n - 1) \left( \frac{4m^2}{n} + \frac{1}{2}(\Delta - \delta)^2+ \frac{2n}{n-2} \left( \frac{2m}{n} - \frac{\Delta + \delta}{2} \right)^2\right)
		\end{equation*}and
		\begin{equation}\nonumber
			\begin{aligned}\label{eq:M2-complement-sum3}
				\hspace{-2cm}M_2(G) + M_2(\overline{G}) &\geq \frac{n(n-1)^3}{2} + 2m^2 - 3m(n-1)^2  + \left(n - \frac{3}{2}\right)\Bigg( \frac{4m^2}{n} + \frac{1}{2}(\Delta - \delta)^2\\  &\quad+ \frac{2n}{n-2} \left( \frac{2m}{n} - \frac{\Delta + \delta}{2} \right)^2\Bigg).
			\end{aligned}
		\end{equation}which are stronger than \begin{equation*}\label{eq:M1-complement-sumn}
			\hspace{-.5cm}M_1(G) + M_1(\overline{G}) \geq {n(n-1)^2} - 4m(n-1) + 2\left( \frac{4m^2}{n} + \frac{2(n-2)}{(n-1)^2}(\Delta - \delta)^2\right),
		\end{equation*}
		\begin{equation*}
			\hspace{-2cm}F(G) + {F}(\overline G) \geq n(n - 1)^3 - 6m(n - 1)^2  + 3(n - 1) \left( \frac{4m^2}{n} + \frac{1}{2}(\Delta - \delta)^2\right)
		\end{equation*} and
		\begin{equation}\nonumber
			\begin{aligned}\label{eq:M2-complement-sum3}
				\hspace{-2cm}M_2(G) + M_2(\overline{G}) &\geq \frac{n(n-1)^3}{2} + 2m^2 - 3m(n-1)^2  + \left(n - \frac{3}{2}\right)\left( \frac{4m^2}{n} + \frac{2(n-2)}{(n-1)^2}(\Delta - \delta)^2\right),
			\end{aligned}
		\end{equation}
		obtained in \cite{das2015zagreb}, \cite{gutman2017generalizations}, and \cite{das2015zagreb}, respectively.
	\end{remark}
	\begin{remark} \label{zr4}
		In \cite{ashrafi2010zagreb}, it was shown that
		\begin{equation*}\label{eq:M2-bar1}
			\overline{M}_2(G) + M_2(G) = 2m^2 - \frac{1}{2} M_1(G).
		\end{equation*}Therefore using \eqref{zte2}, we get
		\begin{equation*}\label{eq:M2-bar}
			\overline{M}_2(G) + M_2(G) \leq  2m^2 - \frac{1}{2} \left( \frac{4m^2}{n} + \frac{1}{2}(\Delta - \delta)^2+ \frac{2n}{n-2} \left( \frac{2m}{n} - \frac{\Delta + \delta}{2} \right)^2\right),
		\end{equation*} which is stronger than
		\begin{equation*}\label{eq:M2-bar}
			\overline{M}_2(G) + M_2(G) \leq  2m^2 - \frac{1}{2} \left( \frac{4m^2}{n} + \frac{2(n-2)}{(n-1)^2}(\Delta - \delta)^2\right),
		\end{equation*}obtained in \cite{das2015zagreb}.\\
		Likewise, applying \eqref{eq:randic-sum} for $\alpha=2$ and $\alpha=3$, and combining with \eqref{zte2}, we respectively obtain the following bounds:
		\begin{equation*}
			\overline{M}_1(G)  \leq  \frac{2m}{n}\left( n(n-1) - 2m \right) - \frac{1}{2}\left( \Delta - \delta \right)^2- \frac{2n}{n-2} \left( \frac{2m}{n} - \frac{\Delta + \delta}{2} \right)^2
		\end{equation*}and
		\begin{equation*}
			F(G) + \overline{F}(G) \geq (n - 1) \left(\frac{4m^2}{n} + \frac{1}{2}(\Delta - \delta)^2 + \frac{2n}{n-2} \left( \frac{2m}{n} - \frac{\Delta + \delta}{2} \right)^2 \right),
			\label{eq:FplusFbar1}
		\end{equation*}which are stronger than
		\begin{equation*}
			\overline{M}_1(G)  \leq  \frac{2m}{n}\left( n(n-1) - 2m \right) - \frac{1}{2}\left( \Delta - \delta \right)^2
		\end{equation*}and
		\begin{equation*}
			F(G) + \overline{F}(G) \geq (n - 1) \left(\frac{4m^2}{n} + \frac{1}{2}(\Delta - \delta)^2  \right),
			\label{eq:FplusFbar1}
		\end{equation*}obtained in \cite{milovanovic2015sharp} and \cite{gutman2017generalizations}, respectively.
	\end{remark}
	
	Let $G$ be a simple graph with $n \geq 3$ vertices and $m$ edges. Let $ \Delta =d_1 \geq d_2 \geq \cdots \geq d_{n-1}\geq d_n=\delta>0$ be the degree sequence. The following are some lower bounds for the modified first Zagreb index ${}^{m}M_1(G)$,
	\begin{align}
		{}^{m}M_1(G) &\geq \frac{ID(G)^2}{n}
		+ \tfrac{1}{2}\left( \tfrac{1}{\delta} - \tfrac{1}{\Delta} \right)^{2},
		\label{eq:mZagrebBound} \\
		{}^{m}M_1(G) &\geq \frac{1}{\Delta^2}
		+ \frac{(n - 1)^3}{(2m - \Delta)^2},
		\label{eq:ineq_upperbound1} \\
		{}^{m}M_1(G) &\geq \frac{1}{\Delta^2} + \frac{1}{\delta^2}
		+ \sqrt{ \frac{\left(ID(G) - \tfrac{1}{\Delta} - \tfrac{1}{\delta}\right)^3}
			{2m - \Delta - \delta} }
		\label{eq:inverse_degree_bound}
	\end{align}and
	\begin{align}
		{}^{m}M_1(G) &\geq \frac{1}{\Delta^2} + \frac{1}{\delta^2}
		+ \frac{(n - 2)^3}{(2m - \Delta - \delta)^2},
		\label{cor:delta-delta-bound}
	\end{align}where
	\eqref{eq:mZagrebBound} was proved in \cite{gutman2017generalizations}, \eqref{eq:ineq_upperbound1} in \cite{matejic2022bounds}, \eqref{eq:inverse_degree_bound} and \eqref{cor:delta-delta-bound} in \cite{milovanovic2023some}.\\
	
	We next derive some results for the modified first Zagreb index ${}^{m}M_1(G)$.
	\begin{cor}\label{pm4}
		Let $ G $ be a simple graph with $ n\geq3$ vertices and $ m $ edges. Let $ \Delta =d_1 \geq d_2 \geq \cdots \geq d_{n-1}\geq d_n=\delta>0$ be degree sequence. Then for any distinct indices $j \ne k \in \{1, 2, \dots, n\}$,
		\begin{equation*}\label{mp4}
			{}^{m}M_1(G) \geq \frac{ID(G)^2}{n}
			+ \frac{1}{2} \left( \frac{1}{d_j} - \frac{1}{d_k} \right)^2
			+ \frac{2n}{n - 2} \left( \frac{ID(G)}{n} - \frac{1}{d_j} + \frac{1}{d_k} \right)^2.
		\end{equation*}
	\end{cor}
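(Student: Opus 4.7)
The plan is to obtain the stated bound as a direct specialization of Theorem \ref{zt1}. Recall from Section \ref{s1} that, with the convention $Z_{\alpha}(G) = \sum_{i=1}^n d_i^\alpha$, one has $Z_{-1}(G) = ID(G)$ and $Z_{-2}(G) = {}^{m}M_1(G)$, so setting $\alpha = -1$ in the inequality \eqref{eq:randic_general} should immediately yield a lower bound on the modified first Zagreb index in terms of the inverse-degree index and the two arbitrary reciprocal degrees $1/d_j$, $1/d_k$.

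Concretely, I would substitute $\alpha = -1$ into the three summands on the right-hand side of \eqref{eq:randic_general}. The first term $(Z_{\alpha}(G))^2/n$ becomes $ID(G)^2/n$; the squared difference $\frac{1}{2}(d_j^\alpha - d_k^\alpha)^2$ becomes $\frac{1}{2}(1/d_j - 1/d_k)^2$; and the remaining term $\frac{2n}{n-2}\bigl(Z_{\alpha}(G)/n - (d_j^\alpha + d_k^\alpha)/2\bigr)^2$ becomes $\frac{2n}{n-2}\bigl(ID(G)/n - (1/d_j + 1/d_k)/2\bigr)^2$, which matches the right-hand side of the stated corollary.

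Since the argument is purely a substitution, I do not anticipate any genuine obstacle. The only prerequisite worth verifying is that every reciprocal $d_i^{-1}$ is well defined, which is guaranteed by the standing hypothesis $\delta > 0$; this also ensures that both $ID(G)$ and ${}^{m}M_1(G)$ are finite real numbers. The entire proof therefore reduces to a one-line invocation of Theorem \ref{zt1} with $\alpha = -1$, combined with the identifications $Z_{-1}(G) = ID(G)$ and $Z_{-2}(G) = {}^{m}M_1(G)$.
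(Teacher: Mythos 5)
Your proof is correct and is exactly the paper's own argument: set $\alpha=-1$ in Theorem \ref{zt1} and use the identifications $Z_{-1}(G)=ID(G)$ and $Z_{-2}(G)={}^{m}M_1(G)$. One caveat: the substitution actually yields the last term $\frac{2n}{n-2}\left(\frac{ID(G)}{n}-\frac{1}{2}\left(\frac{1}{d_j}+\frac{1}{d_k}\right)\right)^2$, so it does not literally ``match'' the displayed right-hand side $\frac{2n}{n-2}\left(\frac{ID(G)}{n}-\frac{1}{d_j}+\frac{1}{d_k}\right)^2$, which is a typographical slip in the statement of the corollary; your derivation gives the correct form.
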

	\begin{proof}
		For $\alpha = -1 $ and $\alpha =-2$, we find that
		%$d_j^\alpha= \frac{1}{d_j}$,
		\begin{equation}\label{me3}
			Z_{-1}(G) = \sum_{i=1}^n d_i^{-1} = \sum_{i=1}^n \frac{1}{d_i} = ID(G),
			% \end{equation}
		\quad \text{and} \quad
		% \begin{equation}
			Z_{-2}(G) = \sum_{i=1}^n d_i^{-2} = \sum_{i=1}^n \frac{1}{d_i^2} = m_{M_1}(G).
		\end{equation}
		Hence, using \eqref{me3}, the Corollary \ref{mp4} follows immediately from Theorem \ref{zt1}.
	\end{proof}
	By selecting the extreme degrees $\Delta$ and $\delta$ in Corollary \ref{pm4},
	we obtain the following refinement of \eqref{eq:mZagrebBound}.
	\begin{cor}
		Let $ G $ be a simple graph with $ n\geq3$ vertices and $ m $ edges. Let $\Delta $ and $\delta > 0$ be the largest and smallest degree, respectively. Then
		\begin{equation}
			{}^{m}M_1(G) \geq\  \frac{ID(G)^2}{n}
			+ \frac{1}{2} \left( \frac{1}{\Delta} - \frac{1}{\delta} \right)^2
			+ \frac{2n}{n - 2} \left( \frac{ID(G)}{n} - \frac{1}{\Delta} + \frac{1}{\delta} \right)^2. \label{eq:first_bound_remark1} \\
		\end{equation}
		Equality holds in \eqref{eq:first_bound_remark1} if and only if $G$ is regular or $G \in \Gamma_{2,n} $ or $G \in \Gamma_{1,n-1} $ or $G \in \Gamma_{2,n-1} .$
	\end{cor}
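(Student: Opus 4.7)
The inequality part is a one-line specialization: apply Corollary \ref{pm4} with the indices $j$ and $k$ chosen so that $d_j=\Delta$ and $d_k=\delta$. With the ordering $\Delta=d_1\geq d_2\geq\cdots\geq d_n=\delta$ one takes $j=1$ and $k=n$, and the right-hand side of \eqref{mp4} reduces immediately to the right-hand side of \eqref{eq:first_bound_remark1}.

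For the equality characterization, the plan is to propagate the equality statement of Lemma \ref{zl2} through Theorem \ref{zt1} and Corollary \ref{pm4}. Applying Lemma \ref{zl2} to the $n$-tuple $x_i=1/d_i$ with chosen pair $(j,k)=(1,n)$, equality in \eqref{zlle1} occurs if and only if the remaining entries are equal, that is
\[
\frac{1}{d_2}=\frac{1}{d_3}=\cdots=\frac{1}{d_{n-1}},
\]
which is equivalent to $d_2=d_3=\cdots=d_{n-1}$. This condition defines exactly the class $\Gamma_{2,n-1}$, and its specialisations $d_1=d_2$ (giving $\Gamma_{1,n-1}$), $d_{n-1}=d_n$ (giving $\Gamma_{2,n}$) and $d_1=d_n$ (the regular case). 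Thus the necessary direction of the equality characterisation will follow directly from the equality clause of Lemma \ref{zl2} without any further computation.

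For the converse direction, I would mimic the calculation performed after inequality \eqref{zte2}: in any of the four listed classes, write $d_2=d_3=\cdots=d_{n-1}=h$ for some $h\in[\delta,\Delta]$, so that
\[
{}^{m}M_1(G)=\frac{1}{\Delta^2}+\frac{1}{\delta^2}+\frac{n-2}{h^2}, \qquad
ID(G)=\frac{1}{\Delta}+\frac{1}{\delta}+\frac{n-2}{h},
\]
and plug these into the right-hand side of \eqref{eq:first_bound_remark1}. A short algebraic manipulation, entirely parallel to the regular/degenerate-class verification carried out earlier in the paper for \eqref{zte2}, will collapse the three terms on the right to ${}^{m}M_1(G)$. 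The regular subcase, $G\in\Gamma_{1,n-1}$, and $G\in\Gamma_{2,n}$ are then obtained by specialising $h=\Delta=\delta$, $h=\Delta$, and $h=\delta$ respectively.

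The main obstacle will be nothing conceptual but the bookkeeping in the converse computation: one must verify that the two quadratic contributions $\tfrac12(1/\Delta-1/\delta)^2$ and $\tfrac{2n}{n-2}\bigl(ID(G)/n-\tfrac12(1/\Delta+1/\delta)\bigr)^2$ together with the mean-square term $ID(G)^2/n$ reassemble to $1/\Delta^2+1/\delta^2+(n-2)/h^2$. This is exactly the identity already implicit in \eqref{zle1}--\eqref{zle10} with $x_i=1/d_i$, so in the write-up I would cite the computation \eqref{zle2}--\eqref{zle10} rather than repeat it, keeping the proof short and conceptual.
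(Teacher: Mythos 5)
Your proposal matches the paper's proof, which consists solely of the one-line specialization $d_j=\Delta$, $d_k=\delta$ in Corollary \ref{pm4}; the paper does not even write out the equality verification, which you supply by the same route the paper uses for \eqref{zte2} (equality in Lemma \ref{zl2} forcing $d_2=\cdots=d_{n-1}$, then checking the four listed classes). This is essentially the same approach, with your treatment of the equality case being slightly more complete than the paper's.
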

	\begin{proof}
		The inequality \eqref{eq:first_bound_remark1} follows from Corollary \ref{pm4} by taking $d_j=\Delta$ and $d_k=\delta$.\\
	\end{proof}Note that the inequality \eqref{eq:first_bound_remark1} always provides better estimate than \eqref{eq:mZagrebBound}.\\
	The following corollary is an immediate consequence of Theorem \ref{zt2} for $\alpha=-1$.
	\begin{cor}\label{pm6}
		Let $G$ be  a simple graph with $n \geq 4$ vertices and $m$ edges.  Let $ \Delta =d_1 \geq d_2 \geq \cdots \geq d_{n-1}\geq d_n=\delta>0$ be the degree sequence. Then
		\begin{equation}
			\hspace{-.8cm} {}^{m}M_1(G) \geq\  \frac{1}{\Delta^2}
			+  \frac{\left(ID(G) - \frac{1}{\Delta} \right)^2}{n - 1}  
			+ \frac{1}{2} \left( \frac{1}{d_{2}} - \frac{1}{\delta} \right)^2 + \frac{2(n - 1)}{n - 3} \left( \frac{ID(G) - \frac{1}{\Delta}}{n - 1}
			- \frac{\frac{1}{d_{2}} - \frac{1}{\delta}}{2} \right)^2, \label{eq:second_bound_remark}
		\end{equation}Equality holds in \eqref{eq:second_bound_remark}  if and only if $G$ is regular, or $G \in \Gamma_{2,n-2}$, or $G \in \Gamma_{2,n-1}$, or $G \in \Gamma_{2,n}$, or $G \in \Gamma_{1,n-2}$, or $G \in \Gamma_{1,n-1}$.
	\end{cor}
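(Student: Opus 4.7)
The plan is to apply Theorem \ref{zt2} in the specialized form corresponding to $\alpha = -1$, since the corollary's statement only differs from \eqref{eq:general_randic_bound} by the particular choice of $\alpha$ and of the three distinguished indices $\ell, j, k$. Specifically, using the identities recorded in \eqref{me3}, namely $Z_{-1}(G) = ID(G)$ and $Z_{-2}(G) = {}^{m}M_1(G)$, the bound \eqref{eq:general_randic_bound} of Theorem \ref{zt2} turns into
\[
{}^{m}M_1(G) \geq d_\ell^{-2} + \frac{\bigl(ID(G) - d_\ell^{-1}\bigr)^2}{n-1} + \tfrac12\bigl(d_j^{-1} - d_k^{-1}\bigr)^2 + \frac{2(n-1)}{n-3}\left(\frac{ID(G) - d_\ell^{-1}}{n-1} - \frac{d_j^{-1} + d_k^{-1}}{2}\right)^2.
\]
I would then choose the three distinguished vertices to be those of extreme degrees, namely $\ell = 1$ (so $d_\ell = \Delta$), $j = 2$ (so $d_j = d_2$), and $k = n$ (so $d_k = \delta$), which is perfectly analogous to the choice made in Corollary \ref{c3} for \eqref{zr31}.

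Plugging in these values, the $d_\ell^{-2}$ term becomes $1/\Delta^2$, the second term becomes $(ID(G) - 1/\Delta)^2/(n-1)$, and the remaining two squared differences involve $1/d_2$ and $1/\delta$. This immediately delivers inequality \eqref{eq:second_bound_remark}, so the main bound requires no calculation beyond the substitution itself.

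For the equality discussion, I would trace the equality cases backward through the proof of Theorem \ref{zt2}: equality there reduces to equality in the intermediate inequality \eqref{eq:randic_refined_inequality}, which is a direct application of Lemma \ref{zl2} to the set $T = \{d_i^{-1} : i \neq \ell\}$ of $n-1$ elements with distinguished indices $j$ and $k$. By the equality characterization \eqref{equality} in Lemma \ref{zl2}, equality therefore forces all $d_i^{-1}$ with $i \notin \{\ell, j, k\}$ to coincide, i.e., $d_3 = d_4 = \cdots = d_{n-1}$ after specializing $\ell = 1, j = 2, k = n$.

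The main (and essentially only) non-routine step is then translating this single equality $d_3 = \cdots = d_{n-1}$, combined with the ordering $\Delta = d_1 \geq d_2 \geq d_3 = \cdots = d_{n-1} \geq d_n = \delta$, into the list of classes $\Gamma_{i,j}$ stated in the corollary. I would proceed by a short case analysis according to whether $d_2$ equals the common middle value, whether $d_1 = d_2$, and whether $d_n$ equals the common middle value, collecting each resulting degree pattern into one of the listed classes (the regular case, $\Gamma_{2,n-2}$, $\Gamma_{2,n-1}$, $\Gamma_{2,n}$, $\Gamma_{1,n-2}$, $\Gamma_{1,n-1}$). The converse direction is routine: for each such class one substitutes the degrees directly into the right-hand side of \eqref{eq:second_bound_remark} and checks that it reduces to ${}^{m}M_1(G)$, mirroring the verification already carried out in the proof following \eqref{zte2}. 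I expect this bookkeeping over the $\Gamma_{i,j}$-cases to be the only part where care is needed.
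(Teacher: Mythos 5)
Your derivation of the inequality is exactly the paper's route: the paper offers no proof beyond the remark that the corollary ``is an immediate consequence of Theorem \ref{zt2} for $\alpha=-1$,'' and your substitution $\alpha=-1$, $d_\ell=\Delta$, $d_j=d_2$, $d_k=\delta$ is precisely that. Two caveats, both traceable to the paper rather than to your reasoning. First, direct substitution into \eqref{eq:general_randic_bound} yields $\tfrac{1}{2}\bigl(\tfrac{1}{d_2}+\tfrac{1}{\delta}\bigr)$ inside the last bracket, as in your displayed formula, whereas the printed \eqref{eq:second_bound_remark} has $\tfrac{1}{2}\bigl(\tfrac{1}{d_2}-\tfrac{1}{\delta}\bigr)$; the printed sign appears to be a typo, and your version is the one the theorem actually delivers. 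Second, your equality analysis correctly lands on $d_3=d_4=\cdots=d_{n-1}$, i.e.\ $G\in\Gamma_{3,n-1}$ together with its subclasses ($\Gamma_{3,n}$, $\Gamma_{2,n-1}$, $\Gamma_{2,n}$, $\Gamma_{1,n-1}$, regular), but the final ``bookkeeping'' step you defer cannot succeed as described: the classes listed in the corollary ($\Gamma_{2,n-2}$, $\Gamma_{1,n-2}$, \dots) are the equality classes for the companion bound that excludes $\delta$ and distinguishes $\Delta$ and $d_{n-1}$ (compare \eqref{zr32}), and a graph in $\Gamma_{3,n-1}$ with $d_2>d_3$ and $d_{n-1}>d_n$ attains equality without belonging to any listed class. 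So your condition is the correct one and the paper's printed list is the mismatch; you should state $\Gamma_{3,n-1}$ and its subclasses rather than trying to force the condition into the classes as printed.
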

	The classical arithmetic-harmonic mean inequality \cite{mitrinovic1970analytic} states that for any
	$n$ positive real numbers $a_1,a_2,\dots,a_n$,
	\begin{equation}
		\label{eq:am_hm}
		\left( \sum_{i=1}^n a_i \right) \left( \sum_{i=1}^n \frac{1}{a_i} \right) \ge n^2.
	\end{equation}
	Applying \eqref{eq:am_hm} to the $(n-1)$ positive real numbers
	$d_2,d_3,\dots,d_n$ gives
	\begin{equation}
		\label{eq:am_hm_d2_dn}
		\left( \sum_{i=2}^n d_i \right)\left( \sum_{i=2}^n \frac{1}{d_i} \right) \ge (n-1)^2.
	\end{equation}
	Since $\sum_{i=2}^n d_i = 2m - \Delta$ and
	$\sum_{i=2}^n \frac{1}{d_i} = ID(G) - \frac{1}{\Delta}$,
	therefore from \eqref{eq:am_hm_d2_dn}, we get
	\begin{equation}
		\label{eq:lower_bound_S_delta}
		ID(G) - \frac{1}{\Delta} \ge \frac{(n-1)^2}{2m - \Delta}.
	\end{equation}Therefore, inequality \eqref{eq:second_bound_remark} provides a stronger bound
	than \eqref{eq:ineq_upperbound1}.\\
	As a special case of Theorem~\ref{zt3} with $\alpha = -1$, we obtain the following corollary.
	\begin{cor}\label{pm7}
		Let $G$ be  a simple graph with $n \geq 5$ vertices and $m$ edges.  Let $ \Delta =d_1 \geq d_2 \geq \cdots \geq d_{n-1}\geq d_n=\delta>0$ be the degree sequence. Then
		\begin{equation}
			\small{}^{m}M_1(G) \geq\  \frac{1}{\Delta^2} + \frac{1}{\delta^2}
			+ \frac{\left(ID(G) - \frac{1}{\Delta} - \frac{1}{\delta} \right)^2}{(n - 2)}
			+ \frac{1}{2} \left( \frac{1}{d_{2}} - \frac{1}{d_{n-1}} \right)^2  + \frac{2(n - 2)}{n - 4} \left(
			\frac{ID(G) - \frac{1}{\Delta} - \frac{1}{\delta}}{n - 2}
			- \frac{ \frac{1}{d_{2}} + \frac{1}{d_{n-1}} }{2}
			\right)^2. \label{eq:zagreb_sharp_bound_remark}
		\end{equation}
		Equality holds in \eqref{eq:zagreb_sharp_bound_remark} if and only if $G$ is regular, or $G \in \Gamma_{3,n-2}$, or $G \in \Gamma_{2,n-2}$, or $G \in \Gamma_{3,n-1}$, or $G \in \Gamma_{2,n-1}$, or $G \in \Gamma_{3,n}$, or $G \in \Gamma_{2,n}$, or $G \in \Gamma_{1,n-2}$, or $G \in \Gamma_{1,n-1}$.
	\end{cor}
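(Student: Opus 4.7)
The plan is to apply Theorem~\ref{zt3} in the special case $\alpha=-1$, mirroring exactly how Corollary~\ref{pm4} is derived from Theorem~\ref{zt1} and Corollary~\ref{pm6} from Theorem~\ref{zt2}. The choices of indices that match the statement of Corollary~\ref{pm7} are $\ell = 1$, $m = n$ (so $d_\ell = \Delta$, $d_m = \delta$) together with $j = 2$, $k = n-1$ (so $d_j = d_2$, $d_k = d_{n-1}$). These four indices are distinct because $n\geq 5$.

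With $\alpha = -1$ one has, by the identities recorded in \eqref{me3},
\begin{equation*}
Z_{-1}(G) \;=\; ID(G), \qquad Z_{-2}(G) \;=\; {}^{m}M_1(G),
\end{equation*}
and for the chosen indices $d_\ell^{\alpha}=1/\Delta$, $d_m^{\alpha}=1/\delta$, $d_j^{\alpha}=1/d_2$, $d_k^{\alpha}=1/d_{n-1}$, together with the squared versions $d_\ell^{2\alpha}=1/\Delta^{2}$ and $d_m^{2\alpha}=1/\delta^{2}$ appearing in the first two terms of \eqref{eq:z4te1_randicnew_swapped}. Substituting these into \eqref{eq:z4te1_randicnew_swapped} produces \eqref{eq:zagreb_sharp_bound_remark} term-by-term; this is a direct symbolic substitution and requires no further estimation.

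For the equality characterization, I would trace back through the proof of Theorem~\ref{zt3}, which invokes Lemma~\ref{zl2} on the $(n-2)$-element set $T=\{d_i^{-1} : i\neq 1,n\}$ with the two distinguished entries $x_j=1/d_2$ and $x_k=1/d_{n-1}$. By the equality clause \eqref{equality} of Lemma~\ref{zl2}, equality in \eqref{eq:refined_T_swapped} holds if and only if the remaining $n-4$ elements $1/d_3 = 1/d_4 = \cdots = 1/d_{n-2}$, equivalently $d_3 = d_4 = \cdots = d_{n-2}$. This common value $h$ (when it exists) can equal any adjacent degree, so the equality cases split naturally according to which of $\Delta, d_2, h, d_{n-1}, \delta$ coincide; enumerating the admissible coincidences in the ordered degree sequence yields exactly the list of $\Gamma_{i,j}$'s stated, namely $G$ regular, $\Gamma_{3,n-2}$, $\Gamma_{2,n-2}$, $\Gamma_{3,n-1}$, $\Gamma_{2,n-1}$, $\Gamma_{3,n}$, $\Gamma_{2,n}$, $\Gamma_{1,n-2}$, or $\Gamma_{1,n-1}$.

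The main (and essentially only) obstacle is the bookkeeping in this last step: one must verify that each listed class $\Gamma_{i,j}$ indeed realizes equality in \eqref{eq:zagreb_sharp_bound_remark} and, conversely, that no further case arises. This is already fully analogous to the equality analysis for \eqref{z24degree} in Corollary~\ref{c35}, since Theorem~\ref{zt3} is applied with the same four distinguished indices; so the verification can be carried out by the same pattern as there, merely with $d_i$ replaced by $1/d_i$ throughout. The substitution step itself is mechanical, making the whole argument a direct corollary.
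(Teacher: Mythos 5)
Your proposal is correct and is exactly the paper's (implicit) argument: the corollary is obtained by substituting $\alpha=-1$, $\ell=1$, $m=n$, $j=2$, $k=n-1$ into Theorem~\ref{zt3} (reading its first two terms as $d_\ell^{2\alpha}+d_m^{2\alpha}$, as the proof of that theorem makes clear), and the equality case reduces to the single condition $d_3=\cdots=d_{n-2}$ from Lemma~\ref{zl2}, of which the listed classes $\Gamma_{i,j}$ are the expressible instances. Nothing further is needed.
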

	\begin{remark}
		Note that, the inequality \eqref{eq:zagreb_sharp_bound_remark} is stronger than \eqref{eq:inverse_degree_bound}. To show this, it is suffice to show that
		\begin{equation}
			\frac{\left(ID(G) - \frac{1}{\Delta} - \frac{1}{\delta} \right)^2}{(n - 2)}  
			\geq
			\sqrt{ \frac{ \left( ID(G) - \frac{1}{\Delta} - \frac{1}{\delta} \right)^3 }{ 2m - \Delta - \delta } }.
			\label{eq:compare_step1}
		\end{equation}
		Squaring both sides of \eqref{eq:compare_step1} and simplifying
		\begin{equation*}
			ID(G) - \frac{1}{\Delta} - \frac{1}{\delta}
			\geq
			\frac{(n - 2)^2}{2m - \Delta - \delta},
			\label{eq:final_inequality1}
		\end{equation*}
		which is true by applying \eqref{eq:am_hm} to the $(n-2)$ positive real numbers $d_3,\dots,d_{n-1},$ and therefore \eqref{eq:zagreb_sharp_bound_remark} is also stronger than  \eqref{cor:delta-delta-bound}.
		% Likewise, applying \eqref{eq:am_hm} to the $(n-2)$ positive real numbers $d_3,\dots,d_{n-1}$gives
		% \begin{equation}
			% ID(G) - \frac{1}{\Delta} - \frac{1}{\delta}
			% \geq
			% \frac{(n - 2)^2}{2m - \Delta - \delta}.
			% \label{eq:final_inequality}
			% \end{equation}
	\end{remark}
	
	\subsection*{Acknowledgment}
	The research of the first author is supported by the University Grants Commission (UGC), Government of India. The second author gratefully acknowledges the support received from the National Board for Higher Mathematics (NBHM), Department of Atomic Energy (DAE), Government of India (No. 02011/30/2025/NBHM(R.P)R\&D-II/9676).

	%%%%%%%%%%%%%%%%%% BIBLIOGRAPHY %%%%%%%%%%%%%
	\bibliographystyle{plain}
	% \bibliography{bibfile}
	%%%%%%%%%%%%%%%%%%%%%%%%%%%%%%%%%%%%%%%%%%%%%
	 %-------------

\end{document}